\documentclass[pdftex]{amsart} 

\usepackage{array,rotating,pdflscape,
  amsfonts,amssymb,mathtools,latexsym,upref,enumerate,cool}
\usepackage[margin=2.0cm]{geometry}
\usepackage{verbatim} 
\usepackage[english]{babel}
\usepackage[latin1]{inputenc}

\usepackage{mathtools}


\usepackage{color,tikz,pgf}
\usetikzlibrary{shapes, intersections, calc, patterns, positioning,
  matrix, arrows, fit, backgrounds, 
  decorations.pathmorphing, decorations.markings}

\newtheorem{lemma}[equation]{Lemma}
\newtheorem{prop}[equation]{Proposition}
\newtheorem{thm}[equation]{Theorem}
\newtheorem{cor}[equation]{Corollary}

\newtheorem{defn}[equation]{Definition}

\theoremstyle{definition}
\newtheorem{exmp}[equation]{Example}

\newtheorem{rmk}[equation]{Remark}

\numberwithin{equation}{section}


\newcommand{\Z}{\mathbf{Z}}

\newcommand{\Q}{\mathbf{Q}}
\newcommand{\C}{\mathbf{C}}
\newcommand{\F}{\mathbf{F}}
\newcommand{\R}{\mathbf{R}}

\newcommand{\Aut}[2]{\operatorname{Aut}_{#1}(#2)}

\newcommand{\Hom}[3][{}]{\operatorname{Hom}_{#1}(#2,#3)}
\newcommand{\Epi}[3][{}]{\operatorname{Epi}_{#1}(#2,#3)}

\newcommand{\Irr}[2]{\operatorname{Irr}_{#1}(#2)}

\newcommand{\ord}[2]{\operatorname{ord}_{#1}(#2)}



\newcommand{\SOodd}[2]{\operatorname{SO}_{#1}(#2)}

\newcommand{\GL}[3][{+}]{\operatorname{GL}^{#1}_{#2}(\F_{#3})}
\newcommand{\GU}[2]{\operatorname{GL}^-_{#1}(\F_{#2})}

\newcommand{\FGL}[2]{\operatorname{FGL}^{#1}_{#2}}
\newcommand{\FSp}[1]{\operatorname{FSp}_{#1}}

\newcommand{\OS}[2]{\operatorname{#1}_{#2}}  

\newcommand{\Sp}[2]{\operatorname{Sp}_{#1}(\F_{#2})}



\newcommand{\cat}[3]{\mathcal{#1}_{#2}^{#3}}  
  


\newcommand{\m}{morphism}
\newcommand{\pol}{polynomial}

\newcommand{\gen}[1]{{\langle}#1{\rangle}}
\newcommand{\syl}[1]{Sylow $#1$-subgroup}

\newcommand{\Mb}{M\"obius}
\newcommand{\Euc}{Euler characteristic}
\newcommand{\rchi}{\widetilde{\chi}}

\newcommand{\B}{\operatorname{B}}

\newcommand{\mynote}[1]{\noindent{\textcolor{red}{\textbf{[#1]}}}}

\newcommand{\sign}{\operatorname{sign}}


\newcommand{\SRIM}[3]{\operatorname{SRIM}^{#1}_{#2}(#3)}
\newcommand{\IM}[2]{\operatorname{IM}_{#1}(#2)}
\newcommand{\SDIM}[3]{\operatorname{SDIM}^{#1}_{#2}(#3)}
\newcommand{\MSP}[1]{\operatorname{MSP}_{#1}}

\newcommand{\Li}{\operatorname{L}}

\pagestyle{headings}

\title{Equivariant Euler characteristics of symplectic buildings}

\author{Jesper M.~M\o ller}
\address{Institut for Matematiske Fag\\
  Universitetsparken 5\\
  DK--2100 K\o benhavn}
\email{moller@math.ku.dk}
\urladdr{htpp://www.math.ku.dk/~moller}


\usepackage[bookmarks=true,bookmarksopen=false]{hyperref}
\hypersetup{
   pdftitle = {},
   pdfauthor = {Jesper Michael MÃÂÃÂ¸ller},
   pdfpagemode = {UseOutlines},
   pdfstartview = {FitH},
   pdfborder = {0 0 0},
   backref = {true},
   colorlinks = {true},
   urlcolor = {blue},
   citecolor = {blue},
   linkcolor = {blue},
   pdftoolbar = {true},
}

\subjclass[2010]{05E18, 06A07} \keywords{Equivariant \Euc, totally
  isotropic subspace, symplectic group over a finite field,
  generating function, irreducible polynomial}

\begin{document}
\date{\today}
\begin{abstract}
  The equivariant \Euc s of the buildings for the symplectic groups
  over finite fields are determined.
\end{abstract}
\maketitle
\tableofcontents

\section{Introduction}
\label{sec:introduction}


Let $G$ be a finite group, $\Pi$ a finite $G$-poset, and $r \geq 1$ a
natural number.
Atiyah and Segal \cite{atiyah&segal89} defined
the \emph{$r$th equivariant reduced \Euc\/} of the
$G$-poset $\Pi$ as the normalised  sum
\begin{equation*}
  \rchi_r(\Pi,G) = \frac{1}{|G|} \sum_{X \in \Hom{\Z^r}G} \rchi(C_{\Pi}(X(\Z^r))
\end{equation*}
of the reduced \Euc s of the $X(\Z^r)$-fixed $\Pi$-subposets,
$C_{\Pi}(X(\Z^r))$, with $X$ ranging over all homo\m s of
$\Z^r$ to $G$. (See Appendix~\ref{sec:equivariant-euc-s-1} for more
information on equivariant \Euc s.) Here are two examples of
equivariant \Euc s:
\begin{enumerate}
\item The general linear group $\GL nq$ acts on the poset $\Li_n^+(\F_q)^*$ of non-extreme subspaces of the $n$-dimensional vector space over the field $\F_q$ of prime power order $q$. The generating function
   for the $(r+1)$th equivariant reduced \Euc s of the $\GL nq$-posets $\Li_n^+(\F_q)^*$ is
\begin{equation*}
    1 + \sum_{n \geq 0} \rchi_{r+1}(\Li_n^+(\F_q), \GL nq) x^n = \prod_{0 \leq j \leq r}
  (1-q^jx)^{(-1)^{r-j}\binom{r}{j}}
\end{equation*}
according to \cite[Theorem~1.4]{jmm:eulergl+}.
\item The general unitary group $\GU nq$ acts on the poset
  $\Li_n^-(\F_q)^*$ of non-extreme
  totally isotropic subspaces of the $n$-dimensional unitary geometry
  over the field $\F_{q^2}$ of prime power order $q^2$. The generating function
   for (minus) the $(r+1)$th equivariant reduced \Euc s of the $\GU nq$-posets $\Li_n^-(\F_q)^*$ is
\begin{equation*}
    1 - \sum_{n \geq 0} \rchi_{r+1}(\Li_n^-(\F_q), \GU nq) x^n = \prod_{0 \leq j \leq r}
  (1+(-1)^{r-j}q^jx)^{(-1)^{r-j}\binom{r}{j}}
\end{equation*}
according to \cite[Theorem~1.4]{jmm:eulergl-}.
\end{enumerate}

In this paper we consider the symplectic case.
For a prime power $q$, $\Sp{2n}{q}$, the isometry group of the
symplectic $2n$-geometry, acts on the poset ${\Li}^*_{2n}(\F_q) =\{ 0
\subsetneq U \subsetneq \F_q^{2n} \mid U \subseteq U^\perp \}$ of
nonzero totally isotropic subspaces. The general definition of
equivariant \Euc s (Definition~\ref{defn:atiyahsegal}) takes in this
special case the following form.

\begin{defn}\cite{atiyah&segal89}\label{defn:chir}
  The $r$th, $r \geq 1$, equivariant reduced \Euc\ of the $\Sp{2n}{q}$-poset
  ${\Li}_{2n}^*(\F_q)$ is the normalised sum
\begin{equation*}
 \rchi_r(\Sp{2n}{q}) = \frac{1}{|\Sp{2n}{q}|}
   \sum_{X \in \Hom {\Z^r}{\Sp{2n}{q}}} \rchi(C_{\Li_{2n}^*(\F_q)}(X(\Z^r))) 
\end{equation*}
of the \Euc s of the induced subposets $C_{\Li_{2n}^*(\F_q)}(X(\Z^r))$
of
$X(\Z^r)$-invariant subspaces as $X$ ranges over all homo\m s of the
free abelian group $\Z^r$ on $r$ generators into the symplectic group.
\end{defn}

We use generating
functions or the symplectic Weyl group representation to present 
this paper's main results about
equivariant \Euc s in the symplectic case.

The generating function for the {\em negative\/} of the $r$th
equivariant reduced \Euc s
of the sequence $(\Li_{2n}^*(\F_q),\Sp{2n}q)_{n \geq 1}$
is the power series
\begin{equation}\label{eq:FSp}
  \FSp r(q,x) = 1 - \sum_{n \geq 1} \rchi_r(\Sp{2n}q)x^n
\end{equation}
with coefficients in the ring $\Z[q]$ of integral polynomials in $q$.  

\begin{thm}\label{thm:main}
  $\FSp 1(q,x)=1$ and 
  $\displaystyle
    \FSp{r+1}(q,x) = \prod_{\substack{0 \leq j \leq r \\ j \not\equiv r
        \bmod 2}} (1-q^jx)^{-\binom{r}{j}} 
  $
for all $r \geq 1$.
\end{thm}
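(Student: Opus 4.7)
The approach is induction on $r$, with the key input being the rational canonical form of one distinguished element of each commuting tuple, together with a product decomposition of both the centralizer and the poset of invariant totally isotropic subspaces.

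First I would write $\Z^{r+1} = \Z \oplus \Z^r$, so that a homomorphism $X \in \Hom{\Z^{r+1}}{\Sp{2n}q}$ becomes a pair $(g,Y)$ with $g \in \Sp{2n}q$ and $Y \in \Hom{\Z^r}{C_{\Sp{2n}q}(g)}$. Sorting $g$ by conjugacy class converts the outer normalization $|\Sp{2n}q|^{-1}\sum_g$ into $\sum_{[g]} |C_{\Sp{2n}q}(g)|^{-1}$, and the inner sum over $Y$, together with $|C_{\Sp{2n}q}(g)|^{-1}$, becomes precisely the $r$th equivariant reduced \Euc\ of the $C_{\Sp{2n}q}(g)$-poset of $g$-invariant elements of $\Li_{2n}^*(\F_q)$. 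This reduces the problem to understanding the triple (conjugacy class, centralizer, invariant subposet) for each $g \in \Sp{2n}q$.

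Next, I would decompose $\F_q^{2n} = \bigoplus_f V_f$ by the rational primary decomposition under $g$, indexed by monic irreducibles $f \in \F_q[x]$. The alternating form forces $V_f \perp V_h$ whenever $h \neq f^*$ (the reciprocal), so self-reciprocal irreducibles (SRIMs) give orthogonal symplectic summands while non-SRIM pairs $\{f,f^*\}$ give hyperbolic summands $V_f \oplus V_{f^*}$ with $V_f$ and $V_{f^*}$ in natural duality. Accordingly, the centralizer $C_{\Sp{2n}q}(g)$ factors as a product of symplectic-type groups on the SRIM summands and general linear groups on each non-SRIM pair, while $g$-invariant totally isotropic subspaces factor as products of local analogues on each summand. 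Both factorizations are $\Z^r$-equivariant, so the $r$th equivariant \Euc\ factors, and summing over all $g$ with a fixed primary type converts the generating function $\FSp{r+1}(q,x)$ into an infinite product of local contributions indexed by irreducibles.

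Each local contribution can now be matched to a known generating function. The non-SRIM pair $\{f,f^*\}$ contributes, after summing over all module structures on $V_f \oplus V_{f^*}$, a factor expressible through the $\GL$-analog $\FGL{+}{r}(q^{\deg f}, x^{\deg f})$, via the duality $V_{f^*} \cong V_f^\vee$. The SRIM contribution reduces to a smaller $\FSp{r}$ over an extension field $\F_{q^d}$, except for the two degree-one SRIMs $x \pm 1$, which must be booked by hand. Inserting the inductive hypothesis for $\FSp{r}$ and the companion formula for $\FGL{+}{r}$ (namely $\FGL{+}{r+1}(q,x) = \prod_{j=0}^{r}(1-q^j x)^{-\binom r j}$), and collecting factors via the cyclotomic identity $\prod_f (1-q^{j\deg f} x^{\deg f})^{-1} = \prod_{d}(1-q^{jd}x^d)^{-N_d}$ and its SRIM-restricted variant, each local factor becomes a product of terms $(1 - q^i x)^{-e}$.

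The main obstacle is the last step: checking that when the SRIM stream (governed by the inductive hypothesis for $\FSp r$ on extensions, whose exponents live on one parity of $j$) is combined with the non-SRIM stream (governed by $\FGL{+}{r}$ on extensions, whose exponents live on all $j$), the totals collapse, via Pascal's identity $\binom{r-1}{j-1} + \binom{r-1}{j} = \binom r j$, to give exponent $\binom r j$ when $j \not\equiv r \bmod 2$ and exponent $0$ when $j \equiv r \bmod 2$. The fact that the ``wrong-parity'' terms cancel between the SRIM and non-SRIM contributions is the delicate combinatorial heart of the argument, and tracking the exponents at the small degree-one SRIMs $x \pm 1$ is where the parity miracle actually happens.
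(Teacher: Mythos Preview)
Your high-level architecture is right---induct on $r$ by peeling off one element $g$, classify $g$ via the self-reciprocal factorization of its characteristic polynomial, and factor both centralizer and invariant poset---but you misidentify one of the local pieces, and that breaks the final cancellation.

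For a semisimple $g$ whose characteristic polynomial has a self-reciprocal irreducible factor $r^-$ of degree $2d \geq 2$, the restriction of $g$ to the corresponding primary summand has its eigenvalues paired by $\alpha \leftrightarrow \alpha^{-1} = \alpha^{q^d}$, and the $\F_q$-alternating form becomes a \emph{hermitian} form over $\F_{q^{2d}}$. The centralizer of $g$ on this block is therefore a unitary group $\GU{m^-}{q^{d}}$, not a symplectic group over an extension field, and the $g$-invariant totally isotropic subspaces on that block are the isotropic $\F_{q^{2d}}$-subspaces for the hermitian form. The local generating function is $\FGL{-}{r}(q^d,x^d)$, not $\FSp r(q^d,x^d)$. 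In the paper's recurrence~\eqref{eq:recur}, the only genuine $\FSp r$ factors come from the degree-one self-reciprocal irreducibles $x \pm 1$; the higher-degree SRIMs contribute the unitary stream $T_{\SRIM -{}q}(\FGL -r(x))$.

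This matters for your endgame. The unitary generating function $\FGL{-}{r+1}$ has factors of the form $(1+q^jx)^{\pm\binom r j}$ on one parity of $j$ and $(1-q^jx)^{\pm\binom r j}$ on the other (see the computation in Lemma~\ref{lemma:solrecur}), and it is precisely the interaction of these signs with the $T_{\SRIM -{}q}$-transform identity~\eqref{eq:S^-} that produces the parity selection $j \not\equiv r \bmod 2$. With $\FSp r$ in place of $\FGL{-}{r}$ you would be feeding the inductive hypothesis back into itself, and the wrong-parity terms will not cancel by Pascal's rule alone.

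A secondary omission: before the centralizer factorization is as clean as you describe, you need to restrict to semisimple $g$. The paper does this via Lemma~\ref{lemma:contrability}: whenever $|g|$ is divisible by the defining characteristic the invariant subposet is contractible, so non-semisimple classes contribute nothing.
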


The first generating functions $\FSp {r+1}(q,x)$ for $0 \leq r \leq 5$ are
\begin{equation*}
  1, \quad \frac{1}{1-x}, \quad
  \frac{1}{(1-qx)^2}. \quad \frac{1}{(1-x)(1-q^2x)^3}, \quad
  \frac{1}{(1-qx)^4(1-q^3x)^4},
  \quad \frac{1}{(1-x)(1-q^2x)^{10}(1-q^4x)^5}
\end{equation*}

The generating function can also be expressed in
the following alternative way. 

\begin{cor}\label{cor:expmain}
$\displaystyle \FSp{r+1}(q,x)=
\exp\Big(\sum_{n \geq 1} \frac{1}{2}  ((q^n+1)^r - (q^n-1)^r) \frac{x^n}{n}
  \Big)
$
for all $r \geq 0$.
\end{cor}

We study also
the $p$-primary
equivariant reduced \Euc s $\rchi(p, \Sp{2n}q)$ of
$(\Li_{2n}^*(\F_q), \Sp{2n}q)$ for a fixed prime $p$
(Definition~\ref{defn:pprimeuc}).  The $r$th $p$-primary generating
function, $\FSp{r}(p,q,x)$, is defined as in
\eqref{eq:FSp} except that $\rchi_r(\Sp{2n}q)$ is replaced by
$\rchi_r(p,\Sp{2n}q)$ \eqref{eq:primgenfct}.

\begin{thm}\label{thm:FSpprimary}
$\displaystyle \FSp{r+1}(p,q,x)=
\exp\Big(\sum_{n \geq 1} \frac{1}{2}  ((q^n+1)_p^r - (q^n-1)_p^r) \frac{x^n}{n}
  \Big)
$
for all $r \geq 0$.
\end{thm}

The infinite product expansions of the generating functions
\begin{alignat*}{3}
  &\FSp{r+1}(q,x) = \prod_{n \geq 1} (1-x^n)^{c_{r+1}(q,n)} &&\qquad
  &&  c_{r+1}(q,n) = \frac{1}{2n}\sum_{d \mid n} \mu(n/d) ((q^d-1)^r
  -(q^d+1)^r) \\
   &\FSp{r+1}(p,q,x) = \prod_{n \geq 1} (1-x^n)^{c_{r+1}(p,q,n)} &&\qquad
  &&  c_{r+1}(p,q,n) = \frac{1}{2n}\sum_{d \mid n} \mu(n/d) ((q^d-1)_p^r
  -(q^d+1)_p^r)
\end{alignat*}
follow immediately from the elementary
\cite[Lemma~3.7]{jmm:eulergl+}.

The ($p$-primary) equivariant reduced \Euc s are
directly linked to the structure  of the symplectic group $\Sp{2n}q$
as a finite group of Lie type.

\begin{thm} \label{thm:WSp}
  For all $n \geq 1$ and all $r \geq 0$,
  \begin{equation*}
    -\rchi_{r+1}(\Sp {2n}q) = \frac{(-1)^n}{|W(C_n)|} \sum_{w \in
      W(C_n)} \det(w) \det(q-w)^r, \ 
    -\rchi_{r+1}(p,\Sp {2n}q) = \frac{(-1)^n}{|W(C_n)|} \sum_{w \in
      W(C_n)} \det(w) \det(q-w)_p^r
  \end{equation*}
  where $W(C_n)$ is the Weyl group representation for the algebraic
  group $\operatorname{Sp}_{2n}(\overline{\F}_s)$, $s = \operatorname{char}(\F_q)$.
\end{thm}

The paper is organised as follows. In Section~\ref{sec:intro} we
briefly recall the definition of the symplectic group as the isometry
group of an even dimensional alternating bilinear form over
$\F_q$. All symplectic auto\m s have self-reciprocal characterisic
\pol s (Proposition~\ref{prop:spdual}) and
Section~\ref{sec:sympl-dual-polyn} deals with the number $\SRIM -nq$
of self-reciprocal irreducible monic \pol s of even degree $n$ over
$\F_q$
(Definition~\ref{defn:Adq}). Section~\ref{sec:equivariant-euc-s}
contains the proof of Theorem~\ref{thm:main} based upon the vanishing
result of Lemma~\ref{lemma:contrability} and the recursive relation of
(\ref{eq:recur}) which is the specific manifestation of the general
recurrence of Lemma~\ref{lemma:CgP}.  Theorem~\ref{thm:main} with
$r=1$ says that $-\rchi_2(\Sp{2n}q)=1$ for all $n \geq 1$ and all
prime powers $q$ confirming the non-block-wise form of the
Kn\"orr--Robinson conjecture for $\Sp{2n}q$ relative to the defining
characteristic (Remark~\ref{rmk:KRconj}). The $r$th $p$-primary
equivariant \Euc\ is \Euc\ computed in Morava
$K(r)$-theory. Section~\ref{sec:prim-equiv-reduc} is the $p$-primary
version of Section~\ref{sec:equivariant-euc-s}. The proof of
Theorem~\ref{thm:FSpprimary} consists in solving recurrence
\eqref{eq:FSppqrecur} which is the $p$-primary version of
\eqref{eq:recur}.  We observe that the $p$-primary equivariant \Euc\
$\rchi_r(p,\Sp{2n}q)$ for $p \nmid q$ only depends on the closure
$\overline{\langle q \rangle}$ of the subgroup
generated by $q$ in
the unit group
$\Z_p^\times$ of the $p$-adic integers.
In Section~\ref{sec:other-pres-equiv}, the equivariant \Euc s
$\rchi_{r+1}(\Sp{2n}q)$ and $\rchi_{r+1}(p,\Sp{2n}q)$ are expressed
directly in terms of integer partitions (Corollary~\ref{cor:Bnq}) or
in terms of determinants of Weyl group elements
(Theorem~\ref{thm:WSp}).
We also consider
 the reciprocal power series
   $\FSp{r+1}(q,x)^{-1}$ and $\FSp{r+1}(p,q,x)^{-1}$
   (Corollary~\ref{cor:Spinv}) and the generating functions
   $\sum_{r \geq 0} -\rchi_{r+1}(\Sp{2n}q) x^r$ with fixed parameter
   $n$ (Corollary~\ref{cor:WSp}). Example~\ref{exmp:WSp} offers
   several concrete examples of the identities established in this section.
In the short Section~\ref{sec:polyn-ident}, we formulate the symplectic
analogs of Th{\'e}venaz' \pol\
identities \cite[Theorems~A--B]{thevenaz92poly}. The paper closes with
two appendices. Appendix~\ref{sec:equivariant-euc-s-1} is a review of
basic properties of equivariant \Euc s and
Appendix~\ref{sec:comp-equiv-euc} recalls facts,
helpful for concrete calculations of equivariant \Euc s,
about Hall's eulerian
functions of groups \cite{hall36}. 

The following notation will be used in this paper:

\begin{tabular}[h]{c|l}
  $p$ & is a prime number \\
  $\nu_p(n)$ & is the $p$-adic valuation of $n$ \\
  $n_p$ & is the $p$-part of the natural number $n$ ($n_p=p^{\nu_p(n)}$) \\
  $\Z_p$ & is the ring of $p$-adic integers \\
  $q$ & is a prime power \\
  $\F_q$ & is the finite field with $q$ elements 
\end{tabular}
 

\section{The symplectic group $\Sp{2n}q$}
\label{sec:intro}

Let $q$ be a prime power and $n \geq 1$ a natural number. The
symplectic $2n$-geometry is the vector space
$V_{2n}(\F_{q}) = \F_{q}^{2n}$ of dimension $2n$ over the field
$\F_{q}$ equipped with the non-degenerate \cite[Definition
3.1]{artin57} alternating ($\gen{u,v} = -\gen{v,u}$) bilinear form
given by
\begin{equation}\label{eq:spform}
  \gen{u,v} = uJv^t = \sum u_iv_{-i}-\sum u_{-i}v_{i}, \qquad
J =
  \begin{pmatrix}
    0 & E \\ -E & 0
  \end{pmatrix}, \qquad u, v \in V_{2n}(\F_{q})
\end{equation}
for all
$u=(u_1,\ldots,u_n,u_{-1},\ldots,u_{-n}),
v=(v_1,\ldots,v_n,v_{-1},\ldots,v_{-n}) \in V_{2n}(\F_{q})$.  The
symplectic $2n$-geometry is the orthogonal direct sum,
$\gen{e_1,e_{-1}} \perp \cdots \perp \gen{e_n,e_{-n}}$, of the $n$
hyperbolic planes $\gen{e_i,e_{-i}}$, $1 \leq i \leq n$ \cite[Theorem
3.7]{artin57}.  The symplectic group
$\Sp{2n}{q} = \{ g \in \GL nq \mid gJg^t=J \}$ is the group of all
auto\m s of the symplectic $2n$-geometry \cite[\S 2.7]{GLSIII}. The
center of $\Sp{2n}{q}$ is trivial if $q$ is even and of order $2$ if
$q$ is odd \cite[Theorem 5.2]{artin57}.

A subspace $U$ of the symplectic geometry $(V_{2n}(\F_{q}), \gen{\cdot,\cdot})$
is totally isotropic if $\gen{U,U}=0$.  The symplectic group acts on the
poset $\Li_{2n}^*(\F_q)$ of all nontrivial totally isotropic
subspaces. Since all vectors are isotropic, $\gen{u,u}=0$, all
$1$-dimensional subspaces are in $\Li_{2n}^*(\F_q)$ (and $\Li_2^*(\F_q)$
is simply the set of $1$-dimensional subspaces of $V_2(\F_q)$).

When the prime power $q=2^e$ is even,
$\Sp{2n}q \cong \SOodd{2n+1}{\F_q}$ \cite[Theorem 2.2.10]{GLSIII}.



\section{Characteristic polynomials of symplectic auto\m s}
\label{sec:sympl-dual-polyn}

\begin{defn}\cite[Definition 3.12]{lidlnieder97}\label{defn:spdual}
  The reciprocal of a degree $n$ \pol\
  $p(x)=a_0x^n+a_{1}x^{n-1}+\cdots+a_{n-1}x+a_n$ over $\F_q$ with
  nonzero constant term is the degree $n$ \pol\
  $p^*(x)=a_0+a_{1}x+\cdots+a_{n-1}x^{n-1}+a_nx^n = x^np(x^{-1})$.
  The \pol\ $p$ is self-reciprocal if $p^*=p$.
\end{defn}

The operation $p(x) \to p^*(x)$ is involutory, multiplicative, and
divisibility respecting ($p^{**}(x)=p(x)$,
$(p_1(x)p_2(x))^* = p_1^*(x)p_2^*(x)$,
$p_1 \mid p_2 \implies p_1^* \mid p_2^*$) on the set of \pol s
$p(x) \in \F_q[x]$ with $p(0) \neq 0$.  The multisets of roots for a
\pol\ and its reciprocal correspond under the inversion map
$\bar\F_q^\times \to \bar\F_q^\times \colon \alpha \to \alpha^{-1}$.
The \pol\ $p(x)=a_0x^n+a_{1}x^{n-1}+\cdots+a_{n-1}x+a_n$ is
self-reciprocal if and only it has a palindromic coefficient sequence,
$a_i=a_{n-i}$, $0 \leq i \leq n$.

\begin{prop} \label{prop:spdual} The characteristic \pol\ of any
  symplectic auto\m\ $g \in \Sp{2n}q$ is a self-reciprocal monic
  \pol .
\end{prop}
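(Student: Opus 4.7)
The plan is to exploit the defining relation $gJg^t=J$ in order to identify $g^{-1}$ with the transpose $g^t$ up to conjugation, and then combine this with the standard linear algebra identity relating $\chi_g^*$ to $\chi_{g^{-1}}$.

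First I would rewrite $gJg^t=J$ as $g^t = J^{-1}g^{-1}J$, which exhibits $g^t$ and $g^{-1}$ as conjugate matrices in $\operatorname{GL}(2n,\F_q)$. Since a matrix shares its characteristic \pol\ with its transpose, this immediately gives $\chi_g(x) = \chi_{g^t}(x) = \chi_{g^{-1}}(x)$.

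Next I would establish the general identity
\begin{equation*}
\chi_h^*(x) \;=\; x^{2n}\chi_h(x^{-1}) \;=\; \det(I-xh) \;=\; \det(-h)\det(xI-h^{-1}) \;=\; \det(h)\,\chi_{h^{-1}}(x)
\end{equation*}
valid for any $h \in \operatorname{GL}(2n,\F_q)$; this uses that $2n$ is even, so that $(-1)^{2n}=1$, together with the factorization $I-xh = -h(xI-h^{-1})$. Setting $h=g$ and combining with the previous step yields $\chi_g^*(x)=\det(g)\,\chi_g(x)$.

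It remains to show that $\det(g)=1$. Taking determinants in $gJg^t=J$ only yields $\det(g)^2=1$, so a priori $\det(g)=\pm 1$, and the main (and really only) obstacle is to rule out the minus sign. The cleanest route is the Pfaffian identity $\operatorname{Pf}(gJg^t)=\det(g)\operatorname{Pf}(J)$ together with $\operatorname{Pf}(J)\neq 0$; alternatively one may invoke the classical fact that $\Sp{2n}{q}\subseteq\operatorname{SL}(2n,\F_q)$ established in \cite{artin57}. In characteristic two there is nothing to rule out. Once $\det(g)=1$ is in hand, the identity $\chi_g^*(x)=\chi_g(x)$ follows, and monicity is automatic since $\chi_g$ is monic and $\chi_g(0)=\det(-g)=\det(g)=1$.
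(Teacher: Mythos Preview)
Your argument is correct. The route is different from the paper's, however. The paper establishes the form identity
\[
\forall u,v \in V \colon \gen{up(g),v} = \gen{ug^n, v\,p^*(g)}
\]
for any monic $p$ of degree $n$, and then defers to \cite[Lemma~3.5]{jmm:eulergl-} (the analogous unitary result) to deduce that the characteristic polynomial is invariant under $p \mapsto p^*$. That approach works at the level of the bilinear form and generalized eigenspaces, and ports directly across the unitary, symplectic, and orthogonal cases with only the form identity changing. Your approach is purely matrix-theoretic: conjugate $g^t$ to $g^{-1}$ via $J$, invoke the elementary identity $\chi_h^*(x)=\det(h)\chi_{h^{-1}}(x)$, and finish with $\det(g)=1$. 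It is more self-contained and arguably more elementary, at the cost of needing the Pfaffian (or the inclusion $\Sp{2n}{q}\subseteq\SL{2n}{\F_q}$ from \cite{artin57}) to pin down the sign of the determinant. One small remark: your final clause about ``monicity'' is slightly garbled --- monicity of $\chi_g$ is immediate from the definition, and the computation $\chi_g(0)=\det(g)=1$ is really just confirming that $\chi_g$ has nonzero constant term so that $\chi_g^*$ is defined in the sense of Definition~\ref{defn:spdual}.
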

\begin{proof}
Let $c_g$ denote the  characteristic \pol\ of $g$ and $p(x)$ a monic
\pol\ of degree $n$. The relation
\begin{equation}\label{eq:spdual}
  \forall u,v \in V \colon \gen{up(g),v} =  \gen{ug^n,v  p^*(g)}
\end{equation}
implies that
  $r \nmid c_g \iff \text{$r(g)$ is invertible} \iff \text{$r^*(g)$ is
    invertible} \iff r^* \nmid c_g $
for any monic irreducible \pol\ $r$ over $\F_q$, and hence that
$c_g^* = c_g$. (Alternatively, taking for given that $\det(g)=+1$, we get $c_g(\lambda) =
c_{g^t}(\lambda) = \det(g^t - \lambda I) = \det(Jg^{-1}J^{-1} -
\lambda I) = \det(-Jg^{-1}J + J(\lambda I)J) = \det(-g^{-1} + \lambda I) = \det(\lambda
g-I) = \lambda^{2n}\det(g - I/\lambda) = \lambda^{2n} c_g(1/\lambda)$.)
\end{proof}

 Conversely, any self-reciprocal \pol\ is the
 characteristic \pol\ for a symplectic auto\m\ \cite[Theorem A.1]{rivin2008}.
 


\begin{prop}\label{prop:qnsr}
  The number of self-reciprocal monic \pol s of even degree $2n$ is
  $q^n$.
\end{prop}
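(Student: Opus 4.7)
The plan is to count self-reciprocal monic polynomials of degree $2n$ directly by freely choosing their independent coefficients, using the palindromic characterization noted just before the proposition.

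Write a monic degree $2n$ polynomial as $p(x) = \sum_{i=0}^{2n} a_i x^{2n-i}$ with $a_0 = 1$. The self-reciprocal condition is equivalent to $a_i = a_{2n-i}$ for $0 \le i \le 2n$. Applied at $i = 0$, this forces $a_{2n} = 1$, so in particular the constant term is nonzero and the reciprocal is well-defined as a degree $2n$ polynomial. For $1 \le i \le n-1$ the relation $a_i = a_{2n-i}$ pairs up the remaining coefficients, and $a_n$ is paired with itself. Hence the map sending $p(x)$ to $(a_1, a_2, \ldots, a_n) \in \F_q^n$ is a bijection between the set of monic self-reciprocal polynomials of degree $2n$ and $\F_q^n$, giving $q^n$ such polynomials.

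The only thing that really needs checking is that no further constraint (beyond being monic and palindromic) is imposed by the definition of self-reciprocal — in particular, that we do not need to exclude choices making the constant term vanish. This is automatic because $a_{2n} = a_0 = 1 \ne 0$, so every palindromic monic choice lies in the domain of the reciprocal operation and satisfies $p^* = p$. There is no real obstacle here; the argument is pure bookkeeping on coefficient sequences and requires no use of structure of $\F_q$ beyond its cardinality.
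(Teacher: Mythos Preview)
Your proof is correct and takes essentially the same approach as the paper: the paper's proof consists of the single sentence ``Self-reciprocal monic polynomials of degree $2n$ have palindromic coefficients,'' and you have simply spelled out the straightforward counting argument that this sentence leaves implicit. Your additional remark that $a_{2n}=a_0=1$ automatically guarantees a nonzero constant term is a nice point of care, but there is no substantive difference between the two arguments.
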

\begin{proof}
  Self-reciprocal monic \pol s of degree $2n$ have
  palindromic coefficients.
  %
\end{proof}

\begin{lemma}\label{lemma:fixmonrec}
  The transformation $r(x) \to r^*(x)/r(0)$ is an involution on the
  set of irreducible monic \pol s $r(x) \in \F_q[x]$  with $r(0)  \neq 0$.

  The irreducible monic \pol\ $r(x)$ with $r(0) \neq 0$ is fixed under
  this involution, when $\deg r =1$, if and only if $r(x) = x \pm 1$,
  and when $\deg r > 1$, if and only if $r(x)$ has even degree,
  $r(0)=1$, and $r(x)$ is self-reciprocal.
\end{lemma}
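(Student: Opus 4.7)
\medskip

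The plan is to first check that the map $T(r) = r^*/r(0)$ is a well-defined involution, and then analyze the fixed-point equation $r^*(x) = r(0) r(x)$ separately by degree.

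\textbf{Involution.} If $r$ is monic irreducible with $r(0) \neq 0$, write $r(x) = x^n + a_{n-1}x^{n-1} + \cdots + a_1 x + a_0$, so that $r^*(x) = a_0 x^n + a_1 x^{n-1} + \cdots + a_{n-1} x + 1$. Thus $T(r) = r^*/a_0$ is again monic with constant term $1/a_0 \neq 0$. Since $*$ is multiplicative, it preserves irreducibility (up to units), so $T(r)$ is irreducible. A direct computation using $r^{**} = r$ gives $T(T(r)) = (r^*/r(0))^* / (r^*/r(0))(0) = (r/r(0)) / (1/r(0)) = r$, confirming $T$ is an involution.

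\textbf{Fixed-point equation.} The relation $T(r) = r$ is $r^*(x) = r(0) r(x)$. Comparing constant terms gives $1 = r(0)^2$, so $r(0) = \pm 1$ (with $-1 = 1$ in characteristic $2$). For degree $1$, write $r(x) = x + a$ with $a = r(0) \neq 0$; then $r^*(x) = ax + 1$ and the equation $ax + 1 = a(x+a)$ reduces to $a^2 = 1$, giving $a = \pm 1$, i.e., $r(x) = x \pm 1$.

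\textbf{Degree $> 1$.} The main small obstacle is excluding the possibility $r(0) = -1$ in odd characteristic; once this is done the rest is straightforward. If $r(0) = -1$ then $r^*(x) = -r(x)$, and evaluating at $x = 1$ gives $r^*(1) = r(1)$ on the one hand and $r^*(1) = -r(1)$ on the other, so $2r(1) = 0$. In odd characteristic this forces $r(1) = 0$, contradicting irreducibility of $r$ with $\deg r > 1$. Hence $r(0) = 1$ (automatically in characteristic $2$), and the fixed-point equation becomes $r^* = r$, i.e.\ $r$ is self-reciprocal. Finally, evenness of $\deg r$ follows because the roots of $r$ in $\bar\F_q$ come in pairs $\{\alpha, \alpha^{-1}\}$, and $\alpha = \alpha^{-1}$ would give $\alpha = \pm 1 \in \F_q$, contradicting irreducibility of degree $> 1$. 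The converse direction (irreducible monic of even degree, $r(0) = 1$, self-reciprocal $\Rightarrow$ fixed) is immediate from $T(r) = r^*/1 = r$.
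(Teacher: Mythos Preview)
Your proof is correct and follows essentially the same approach as the paper: both use the root-pairing $\alpha \leftrightarrow \alpha^{-1}$ to force even degree, and both evaluate the fixed-point relation $r^*(x)=r(0)r(x)$ at a point where $r$ cannot vanish to pin down $r(0)=1$. The only cosmetic difference is ordering: the paper first establishes evenness and then evaluates at $x=-1$ (using that $\deg r$ is even and $r(-1)\neq 0$), whereas you evaluate at $x=1$ first (which does not require knowing the parity of $\deg r$) and deduce evenness afterward.
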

\begin{proof}
  If $r(x)$ has degree at least $2$, the degree of $r$ must be even,
  since the set of roots in the algebraic closure is invariant under
  inversion
  $\bar\F_q^\times \to \bar\F_q^\times \colon \alpha \to \alpha^{-1}$
  and the fixed points, $\pm 1$, are not roots of $r(x)$. The relation
  $r(0)r(x)=r^*(x)=x^{\deg r}r(1/x)$ evaluated at $x=-1$ gives
  $r(0)=1$. Thus $r(x)=r^*(x)$ and $r(x)$ is self-reciprocal.
\end{proof}

\begin{prop}\label{prop:srfact}
  Let $p(x)$ be a self-reciprocal monic \pol . The canonical
  factorisation \cite[Theorem 1.59]{lidlnieder97} of $p(x)$ has the
  form
  \begin{equation*}
    p(x) =
    \begin{cases} \displaystyle
    (x-1)^{a_-}  \times (x+1)^{a_+}
    \times \prod_i r_i^-(x)^{m_i^-} \times \prod_j
    (s_j(x)s_j^*(x)/s_j(0))^{m_j^+} & \text{$q$ odd} \\
    \displaystyle  (x+1)^{a_+}
    \times \prod_i r_i(x)^{m_i^-} \times \prod_j
    (s_j(x)s_j^*(x)/s_j(0))^{m_j^+} & \text{$q$ even}
  \end{cases}
    \end{equation*}
  where
  \begin{equation*}
    \deg p =
    \begin{cases}
      a_-+a_++\sum_im_i^-\deg r_i + 2\sum_j m_j^+\deg s_j  &
      \text{$q$ odd} \\
      a_++\sum_im_i^-\deg r_i + 2\sum_j m_j^+\deg s_j  &
      \text{$q$ even}
    \end{cases}
  \end{equation*}
  and $a_-,a_+, m^-_i, m^+_j \geq 0$, $a_-$ is even, the $r_i^-(x)$ are
  self-reciprocal irreducible monic \pol s of even degree at least
  $2$, and the $s_j(x)$ are non-self-reciprocal irreducible monic \pol s
  distinct from $x -1$. Conversely, any \pol\ with a canonical
  factorisation of this form is a self-reciprocal monic \pol .
\end{prop}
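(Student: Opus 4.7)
The plan is to reduce to Lemma~\ref{lemma:fixmonrec} by orbit-decomposing the canonical factorization of $p(x)$ under the involution $r(x)\mapsto r^*(x)/r(0)$ on monic irreducibles with nonzero constant term. Since $p$ is monic and self-reciprocal, $p(0)$ equals the leading coefficient $1$, so every irreducible factor of $p$ has nonzero constant term and the involution applies to each of them.

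I would write the canonical factorization as $p(x)=\prod_i r_i(x)^{e_i}$ with the $r_i$ distinct monic irreducibles, and use that $*$ is multiplicative and that $p^*=p$ to rewrite
\[
p(x)=p^*(x)=\prod_i r_i^*(x)^{e_i}=\Bigl(\prod_i r_i(0)^{e_i}\Bigr)\prod_i\bigl(r_i^*(x)/r_i(0)\bigr)^{e_i}.
\]
Unique factorization then forces $\prod_i r_i(0)^{e_i}=1$ and forces the involution to permute the $r_i$ while preserving multiplicities. Classifying orbits via Lemma~\ref{lemma:fixmonrec}, fixed points contribute the blocks $(x-1)^{a_-}$, $(x+1)^{a_+}$, and $\prod_i(r_i^-(x))^{m_i^-}$ with $r_i^-$ a self-reciprocal irreducible of even degree $\geq 2$; each $2$-orbit $\{s_j,\,s_j^*/s_j(0)\}$, where $s_j\neq s_j^*/s_j(0)$, contributes $\bigl(s_j(x)\cdot s_j^*(x)/s_j(0)\bigr)^{m_j^+}$. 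In characteristic $2$ one has $x-1=x+1$, so the $(x-1)^{a_-}$ block is absent and the $s_j$'s are simply required to be distinct from $x+1$.

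The parity of $a_-$ in odd characteristic comes from $(x-1)^*=-(x-1)$, which gives $((x-1)^{a_-})^*=(-1)^{a_-}(x-1)^{a_-}$. All other blocks are individually self-reciprocal ($(x+1)$ trivially, each $r_i^-$ by construction, and $\bigl(s_j\cdot s_j^*/s_j(0)\bigr)^*=s_j^*\cdot s_j/s_j(0)$ up to the constant $s_j(0)$, which cancels because $s_j$ and $s_j^*/s_j(0)$ are both monic), so self-reciprocity of $p$ is equivalent to $(-1)^{a_-}=1$, i.e., $a_-$ even. The degree identity is additivity of degree. The converse direction is the same block-by-block self-reciprocity check read in reverse. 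The only delicate step is the bookkeeping of the scalars $r_i(0)$ when comparing the two factorizations of $p^*$, and verifying that the paired product $s_j\cdot s_j^*/s_j(0)$ really is a self-reciprocal polynomial and not just a scalar multiple of one; both points are resolved by the identities above, and I expect no further obstacle.
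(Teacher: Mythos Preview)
Your proposal is correct and follows essentially the same route as the paper: decompose the canonical factorization into orbits under the involution $r\mapsto r^*/r(0)$, invoke Lemma~\ref{lemma:fixmonrec} to identify the fixed points, and read off the stated form. The only cosmetic difference is in the parity argument for $a_-$: the paper evaluates $p(0)=(-1)^{a_-}=1$ directly from the factorization, whereas you compute $p^*=(-1)^{a_-}p$ block by block; both arguments are equivalent and short.
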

\begin{proof}
  Let $p(x) = \prod r_i(x)^{e_i}$ be the canonical
  factorisation. Since $p(x)$ is monic and self-reciprocal, $p(0)=1$,
  and $p(x) = p^*(x)= p^*(x)/p(0) = \prod (r_i^*(x)/r_i(0))^{e_i}$
  where $r_i^*(x)/r_i(0)$ are irreducible monic \pol s \cite[Remark
  2.1.49]{mullen}. Thus the multiset of the irreducible factors of
  $p(x)$ is invariant under the involution $r(x) \longleftrightarrow
  r^*(x)/r(0)$. Group the
irreducible factors into those fixed by this involution and pairs
  interchanged by it. An irreducible factor of degree $\geq 2$ is
  fixed by the involution if and only if it self-reciprocal according to
Lemma~\ref{lemma:fixmonrec}. Any irreducible linear factor, which has the
form $x-\alpha$ for some $\alpha \in \F_q^\times$, is fixed by the
involution if and only if $\alpha = \pm 1$ ($\alpha =1$ when $q$ is
even). Thus $p(x)$ has a canonical factorisation of the form shown in
the proposition. When $q$ is odd, the multiplicity, $a_-$, of 
the factor $x-1$ is even because
$1 = p(0) = (-1)^{a_-}$.

Conversely, if $p(x)$ has a factorisation as in the proposition, then
$(x-1)^{a_-}$ is self-reciprocal as $a_-$ is even, and as also the other
factors, $x+1$, $r_i^-(x)$, $s_j(x)s_j^*(x)/s_j(0)$, are
self-reciprocal, the \pol\ $p(x)$ is self-reciprocal.
\end{proof}

All factors on the right hand side of the formula of
Proposition~\ref{prop:srfact} are self-reciprocal. The exponent $a_-$
is even while $a_+$ has the same parity as the degree of $p$.

\begin{defn}\cite[Definition 2.1.23, Remark 3.1.19]{mullen}\label{defn:Adq}
  For every integer $n \geq 1$
  \begin{itemize}
  \item $\IM nq$ is the number of Irreducible Monic \pol s $p(x)$ of degree
    $n$ over $\F_{q}$ with $p(0) \neq 0$ 
\item $\SRIM -nq$  is the number of Self-Reciprocal Irreducible Monic
  \pol s $p(x)$ of even
  degree $2n$ over $\F_{q}$  
\item $\SRIM  +nq$  is the number of unordered pairs $\{ p(x), p^*(x)/p(0) \}$ of irreducible monic
  \pol s $p(x)$ of
  degree $n$ over $\F_{q}$ with $p(0) \neq 0$ and $p(x) \neq p^*(x)/p(0)$
  \end{itemize}
\end{defn}

For any $n \geq 1$ \cite[Theorem 2.1.24, Theorem 3.1.20]{mullen}
\cite[Theorem 3]{meyn1990}
\begin{equation}\label{eq:SRIM} 
  \IM nq = \frac{1}{n} \sum_{d \mid n} \mu(d)(q^{n/d}-1)
  \qquad
  \SRIM -{n}q =
  \begin{cases} \displaystyle
    \frac{1}{2n} \sum_{\substack{d \mid n \\ d \equiv 1 \bmod 2}} 
      \mu(d)(q^{n/d}-1) & \text{$q$ odd} \\ \displaystyle
       \frac{1}{2n} \sum_{\substack{d \mid n \\ d \equiv 1 \bmod 2}} 
      \mu(d)q^{n/d} & \text{$q$ even} 
  \end{cases}
\end{equation}
and we have
\begin{equation}\label{eq:IMSRIM}
  \IM nq =
  \begin{cases}
    2 \SRIM +nq & \text{$n>1$ odd} \\
    2 \SRIM +nq + \SRIM -{n/2}q & \text{$n>1$ even} \\
  \end{cases}
\end{equation}
In degree $n=1$,  in particular, $\IM 1q = q-1$ and
\begin{equation*}
  \SRIM +1q =
  \begin{cases}
    \frac{1}{2}(q-3) & \text{$q$ odd} \\
    \frac{1}{2}(q-2) & \text{$q$ even} 
  \end{cases}
\end{equation*}
For odd $q$, the $\frac{1}{2}(q-3)$ unordered pairs are the pairs
$\{x-\alpha,x-\alpha^{-1}\}$ with $\alpha \in \F_q^* - \{-1,+1\}$. For
even $q$, the $\frac{1}{2}(q-2)$ unordered pairs are the pairs
$\{x-\alpha,x-\alpha^{-1}\}$ with $\alpha \in \F_q^* - \{1\}$.

\begin{lemma}\label{lemma:SRIMm}
  Let $m \geq 1$ and $k \geq 0$. Then
  $2^{k}\SRIM -{2^km}q = \SRIM -m{q^{2^k}}$ for all prime powers $q$.
  When $m$ is odd,
  \begin{equation*}
    2^{k+1}\SRIM -{2^km}q  \stackrel{\text{$q$ odd}}{=} \IM m{q^{2^k}} \qquad
    2^{k+1}\SRIM -{2^km}q \stackrel{\text{$q$ even}}{=}
    \begin{cases}
      \IM 1q +1 & m=1 \\ \IM m{q^{2^k}} & m>1
    \end{cases}
  \end{equation*}
\end{lemma}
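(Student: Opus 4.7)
The plan is to prove both identities directly from the closed formulas for $\SRIM{-}{n}{q}$ and $\IM{n}{q}$ in equation~\eqref{eq:SRIM}. Two elementary facts drive everything: first, the odd divisors of $2^{k}m$ are exactly the odd divisors of $m$ (any odd $d$ dividing $2^{k}m$ is coprime to $2^{k}$ and so divides $m$); second, $q$ and $q^{2^k}$ have the same parity, so the same case of \eqref{eq:SRIM} applies whether one works with base field $\F_q$ or with $\F_{q^{2^k}}$.

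For the first equality $2^{k}\SRIM{-}{2^km}q = \SRIM{-}{m}{q^{2^k}}$, I would substitute $n = 2^{k}m$ in \eqref{eq:SRIM}, use the above observation to replace the index set ``$d\mid 2^{k}m$, $d$ odd'' by ``$d\mid m$, $d$ odd'', and rewrite $q^{n/d} = (q^{2^k})^{m/d}$. The normalization factor $1/(2n) = (1/2^{k})\cdot 1/(2m)$ then supplies the $1/2^{k}$ on the left-hand side, and the remaining sum is exactly $\SRIM{-}{m}{q^{2^k}}$ by \eqref{eq:SRIM} applied with base $q^{2^k}$, in both the odd and even cases.

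Next, for the claims about $\IM{m}{q^{2^k}}$ under the assumption that $m$ is odd, every divisor of $m$ is odd, so the parity restriction in \eqref{eq:SRIM} becomes vacuous. Multiplying the first identity by $2$, when $q$ is odd I obtain
\[
2^{k+1}\SRIM{-}{2^km}q \;=\; 2\SRIM{-}{m}{q^{2^k}} \;=\; \frac{1}{m}\sum_{d\mid m}\mu(d)\bigl((q^{2^k})^{m/d}-1\bigr),
\]
which is $\IM{m}{q^{2^k}}$ by \eqref{eq:SRIM}. When $q$ is even, the analogous reduction yields $\tfrac{1}{m}\sum_{d\mid m}\mu(d)(q^{2^k})^{m/d}$; this differs from $\IM{m}{q^{2^k}}$ by the correction $\tfrac{1}{m}\sum_{d\mid m}\mu(d)$, and by the standard M\"obius property the correction vanishes for $m>1$ and equals $1$ for $m=1$, producing the two bracketed cases.

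The whole argument is really an exercise in accounting for the normalizing factors in the Gauss--type counting formulas, so there is no genuine obstacle; the only minor point needing care is verifying that the parity of $q^{2^k}$ matches the case of \eqref{eq:SRIM} one wants to invoke, which is automatic for every $k\geq 0$.
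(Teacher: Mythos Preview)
Your argument is correct and follows the same route as the paper: direct substitution into~\eqref{eq:SRIM}, using that the odd divisors of $2^{k}m$ are exactly the odd divisors of $m$, then identifying the result as $\SRIM{-}{m}{q^{2^k}}$ and, for $m$ odd, as $\tfrac12\IM{m}{q^{2^k}}$. The paper writes out only the odd-$q$ case; your M\"obius-correction treatment of even $q$ is the natural completion, and in fact it yields $\IM{1}{q^{2^k}}+1$ (not $\IM{1}{q}+1$) in the $m=1$ subcase, so the displayed statement appears to contain a typo there.
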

\begin{proof}[Proof for odd $q$]
  When computing $\SRIM -{2^km}q$ from formula \eqref{eq:SRIM}, only divisors of $m$ matter so
  \begin{equation*}
    2^k \SRIM -{2^km}q =
    \frac{1}{2m} \sum_{d \mid m} \mu(d)(q^{2^km/d}-1) =
    \SRIM -m{q^{2^k}}
  \end{equation*}
  Assuming $m$ is odd, $\SRIM -m{q} = \frac{1}{2} \IM m{q}$, so
  $2^k \SRIM -{2^km}q = \SRIM -m{q^{2^k}} = \frac{1}{2}\IM m{q^{2^k}}$.
\end{proof}

\begin{lemma}\label{lemma:IMSRIM}
  For all $n \geq 1$,
  \begin{equation*}
    \IM nq  \stackrel{\text{$q$ odd}}{=}
    \begin{cases}
      2 \SRIM -nq & \text{$n$ odd} \\
      2 \SRIM -nq - \SRIM -{n/2}q & \text{$n$ even} \\
    \end{cases} \qquad
    \IM nq  \stackrel{\text{$q$ even}}{=}
    \begin{cases}
      2\SRIM -1q -1 & n=1 \\
      2 \SRIM -nq & \text{$n>1$ odd} \\
      2 \SRIM -nq - \SRIM -{n/2}q & \text{$n$ even} \\
    \end{cases}
  \end{equation*}
\end{lemma}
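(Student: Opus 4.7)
My plan is to prove the identities by direct algebraic manipulation of the Möbius sums in formula \eqref{eq:SRIM}, together with the standard expression $\IM nq = \frac{1}{n}\sum_{d\mid n}\mu(d)(q^{n/d}-1)$ that already appears there. All four cases reduce to rearranging a single divisor sum according to the $2$-adic structure of the index of summation.

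First I will handle the odd-$n$ cases. When $n$ is odd, every divisor $d$ of $n$ is odd, so the parity constraint $d \equiv 1 \bmod 2$ in the formula for $\SRIM^-_n q$ is automatic. Comparing the two Möbius sums term by term gives $2\SRIM^-_n q = \IM nq$ in the $q$-odd case immediately, and the same comparison works for $q$ even and $n>1$ after observing that $\sum_{d\mid n}\mu(d)=0$ lets the $-1$ terms be dropped from $\IM nq$. The remaining case $n=1$, $q$ even is a direct numerical check: $\IM 1q=q-1$ while $2\SRIM^-_1q = q$, which accounts for the stray $-1$.

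For even $n$ I will split the full sum
\[
  \IM nq \;=\; \frac{1}{n}\!\!\sum_{d\mid n,\;d\text{ odd}}\!\!\mu(d)(q^{n/d}-1)\;+\;\frac{1}{n}\!\!\sum_{d\mid n,\;d\text{ even}}\!\!\mu(d)(q^{n/d}-1)
\]
according to the parity of $d$. The odd-divisor piece is, by definition, exactly $2\SRIM^-_n q$. For the even-divisor piece I reindex $d = 2e$; since $\mu(2e)=0$ unless $e$ is odd, the sum ranges over odd divisors $e$ of $n/2$, and $\mu(2e)=-\mu(e)$. Using $\tfrac{1}{n}=\tfrac{1}{2(n/2)}$ and matching against the formula for $\SRIM^-_{n/2} q$ shows that this piece equals $-\SRIM^-_{n/2} q$, yielding $\IM nq = 2\SRIM^-_n q - \SRIM^-_{n/2} q$. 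The $q$-even analogue is identical once the $-1$ summands are removed via $\sum_{d\mid n}\mu(d)=0$ (valid since $n\geq 2$).

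There is no real obstacle here beyond bookkeeping; the only subtlety is to ensure that the reindexation $d=2e$ produces exactly the odd divisors of $n/2$, which is automatic because divisibility by $2$ in $d$ is absorbed by the $2$ in $n$, while $\mu$ forces any further factor of $2$ out. Once this is set up, the four cases split cleanly along the two binary choices (parity of $n$, parity of $q$), with the single arithmetic adjustment at $n=1$, $q$ even coming from the failure of $\sum_{d\mid 1}\mu(d)=0$.
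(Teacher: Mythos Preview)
Your argument is correct and is in fact slightly more streamlined than the paper's. Both proofs rest on the same idea: split the M\"obius sum for $\IM nq$ according to the parity of the divisor and identify the two pieces. The paper, however, takes a small detour through Lemma~\ref{lemma:SRIMm}: writing $n=2^km$ with $m$ odd, it first recognises the odd and even pieces as $\tfrac{1}{2^k}\IM m{q^{2^k}}$ and $-\tfrac{1}{2^k}\IM m{q^{2^{k-1}}}$, and only then converts these to $\SRIM -{}{}$ via the base-change identity $2^k\SRIM -{2^km}q=\SRIM -m{q^{2^k}}$. You bypass this detour entirely by matching each piece directly against the defining formula for $\SRIM -{}{}$, which makes Lemma~\ref{lemma:SRIMm} unnecessary for this step. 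The gain is modest---the paper's route is not longer in any essential way, and Lemma~\ref{lemma:SRIMm} is needed elsewhere regardless---but your version is self-contained and the bookkeeping is cleaner. Your handling of the $q$-even case via $\sum_{d\mid n}\mu(d)=0$ for $n\ge 2$, with the single correction at $n=1$, is also correct and matches the statement exactly.
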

\begin{proof}[Proof for odd $q$]
  Let $m \geq 1$ be odd. By Lemma~\ref{lemma:SRIMm},
  $\IM mq =2 \SRIM -mq$ and
  \begin{multline*}
    \IM{2^km}q =
    \frac{1}{2^km} \big(
    \sum_{d \mid m} \mu(d)(q^{2^km/d}-1) -
    \sum_{d \mid m} \mu(d)(q^{2^{k-1}m/d}-1) \big) =
    \frac{1}{2^k} \big( \IM m{q^{2^k}} - \IM m{q^{2^{k-1}}} \big) \\
    \stackrel{\text{Lemma~\ref{lemma:SRIMm}}}{=}
    2\SRIM -{2^km}q - \SRIM -{2^{k-1}m}q 
  \end{multline*}
  for $k \geq 1$.
\end{proof}

\begin{lemma}\label{lemma:Sm+Sp}
  For all $n \geq 1$,
  \begin{equation*}
    \SRIM -nq + \SRIM +nq \stackrel{\text{$q$ odd}}{=}
    \begin{cases}
      \IM 1q -1 & n=1 \\ \IM nq & n>1
    \end{cases} \qquad
    \SRIM -nq + \SRIM +nq \stackrel{\text{$q$ even}}{=} \IM nq, \quad
    n \geq 1
  \end{equation*}
\end{lemma}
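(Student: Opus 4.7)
The plan is to obtain the claimed identity by adding two already-available decompositions of $\IM nq$: Equation~\eqref{eq:IMSRIM}, which expresses $\IM nq$ in terms of $\SRIM +nq$ (with a correction $\SRIM -{n/2}q$ when $n$ is even), and Lemma~\ref{lemma:IMSRIM}, which expresses $\IM nq$ in terms of $\SRIM -nq$ (with a correction $-\SRIM -{n/2}q$ when $n$ is even). The key observation is that this correction term enters the two decompositions with \emph{opposite} signs, so summing eliminates it and isolates $\SRIM -nq + \SRIM +nq$.

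Concretely, assume first that $n > 1$. If $n$ is odd, both decompositions reduce to $\IM nq = 2\SRIM +nq$ and $\IM nq = 2\SRIM -nq$, and adding gives $2\IM nq = 2(\SRIM -nq + \SRIM +nq)$, as desired. If $n$ is even, the two decompositions read
\begin{equation*}
  \IM nq = 2\SRIM +nq + \SRIM -{n/2}q, \qquad \IM nq = 2\SRIM -nq - \SRIM -{n/2}q,
\end{equation*}
and again summing yields $2\IM nq = 2(\SRIM -nq + \SRIM +nq)$. This covers all $n > 1$ uniformly in the parity of $q$.

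The base case $n = 1$ must be handled separately because Equation~\eqref{eq:IMSRIM} does not apply there, and Lemma~\ref{lemma:IMSRIM} carries a characteristic-dependent $\pm 1$ shift. I would compute directly using \eqref{eq:SRIM} together with the closed forms for $\SRIM +1q$ recorded just before the lemma: for odd $q$, $\SRIM -1q = (q-1)/2$ and $\SRIM +1q = (q-3)/2$ sum to $q-2 = \IM 1q - 1$; for even $q$, $\SRIM -1q = q/2$ and $\SRIM +1q = (q-2)/2$ sum to $q-1 = \IM 1q$. There is no real obstacle beyond some case bookkeeping: the two decompositions are engineered (by grouping irreducible factors according to whether they are fixed or paired under $r(x) \longleftrightarrow r^*(x)/r(0)$) precisely so that their pair contributions and their self-reciprocal contributions cancel in the sum, and the only genuinely delicate ingredient is the characteristic-dependent availability of the linear self-reciprocal factors $x \pm 1$, which is exactly what forces the $n=1$ correction in odd characteristic.
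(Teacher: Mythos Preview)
Your proposal is correct and uses essentially the same ingredients as the paper's proof: the base case $n=1$ is handled by the same direct computation, and for $n>1$ both arguments combine Equation~\eqref{eq:IMSRIM} with Lemma~\ref{lemma:IMSRIM} so that the $\SRIM -{n/2}q$ correction cancels. Your packaging---adding the two identities outright rather than solving one for $\SRIM +nq$ and substituting as the paper does---is a bit cleaner, but the mathematical content is identical.
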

\begin{proof}
  Assume the prime power $q$ is odd. If $n=1$ then
  $\IM 1q = q-1$,
  $\SRIM -1q=\frac{1}{2}(q-1)$, and $\SRIM +1q=\frac{1}{2}(q-3)$ so
  that indeed $\SRIM -1q + \SRIM +1q =q-2 =\IM 1q -1$. For odd $m>1$,
  $\SRIM -mq = \frac{1}{2} \IM mq =\SRIM +mq$ so
  that clearly $\SRIM -mq + \SRIM +mq = \IM mq$.
  For odd $m \geq 1$ and $k \geq 1$,
  \begin{multline*}
    \SRIM -{2^km}q + \SRIM +{2^km}q \stackrel{\eqref{eq:SRIM}}{=}
    \SRIM -{2^km}q + \frac{1}{2} \big( \IM{2^km}q - \SRIM -{2^{k-1}m}q
    \big) \\=
    \frac{1}{2} \big( 2 \SRIM -{2^km}q - \SRIM -{2^{k-1}m}q \big) +
    \frac{1}{2} \IM{2^km}q
    \stackrel{\text{Lemma~\ref{lemma:IMSRIM}}}{=}
    \frac{1}{2} \IM{2^km}q + \frac{1}{2} \IM{2^km}q =\IM{2^km}q 
   \end{multline*}
   This finishes the proof for odd $q$.

   Assume that $q$ is an even prime power. In degree $1$,
   $\SRIM -1q + \SRIM +1q = \frac{1}{2}q + \frac{1}{2}q-1 =q-1 = \IM
   1q$.  For odd $m >1$, $\SRIM -mq = \frac{1}{2} \IM mq = \SRIM +mq$
   by Lemma~\ref{lemma:SRIMm} so $\SRIM -mq + \SRIM +mq = \IM mq$.
   For odd $m \geq 1$ and any $k \geq 1$, we can again use
   Lemma~\ref{lemma:IMSRIM} and it follows, as for odd $q$, that
   $\SRIM -{2^km}q + \SRIM -{2^km}q =\IM{2^km}q$.
\end{proof}

\begin{figure}[t]
  \centering
  \begin{tabular}[h]{>{$}c<{$}|*{7}{>{$}c<{$}}}
    {} & n=1 & n=2  & n=3 & n=4 & n=5 & n=6 & n=7  \\ \hline
    \SRIM -nq & \frac{1}{2}(q - 1)  & \frac{1}{4}(q^2 - 1) & 
    \frac{1}{6}(q^3 - q) & \frac{1}{8}(q^4-1) & \frac{1}{10}(q^5-q)
     & \frac{1}{12}(q^6-q^2)  & \frac{1}{14}(q^7-q)\\
    \SRIM +nq & \frac{1}{2}(q - 3) &
    \frac{1}{4}(q^2 - 2q + 1) &
    \frac{1}{6}(q^3 - q) &
    \frac{1}{8}(q^4 - 2q^2 + 1) &
    \frac{1}{10}(q^5 - q) &
    \frac{1}{12}(q^6 - 2q^3 - q^2 + 2q) &
    \frac{1}{14}(q^7 - q) 
    \end{tabular} 
  \caption{The \pol s $\SRIM {\pm}nq$ for odd $q$}
  \label{fig:SRIM}
\end{figure}


\section{Proofs of Theorem~\ref{thm:main} and Corollary~\ref{cor:expmain} }
\label{sec:equivariant-euc-s}

We recursively compute the generating functions $\FSp r(q,x)$,
$r \geq 1$, of Theorem~\ref{thm:main}.

\begin{lemma}\label{lemma:rchi1}
  $-\rchi_1(\Sp{2n}q) = 0$ for all $n \geq 1$.
\end{lemma}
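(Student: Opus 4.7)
The plan is to reduce the computation of $\rchi_1(\Sp{2n}q)$ to the reduced \Euc\ of the orbit poset $\Li_{2n}^*(\F_q)/\Sp{2n}q$, identify that orbit poset as a chain, and conclude from the contractibility of a simplex.

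Since $\Hom{\Z}{\Sp{2n}q}$ is in bijection with $\Sp{2n}q$ via evaluation at the generator, Definition~\ref{defn:chir} at $r=1$ specialises to
\begin{equation*}
\rchi_1(\Sp{2n}q) = \frac{1}{|\Sp{2n}q|}\sum_{g \in \Sp{2n}q} \rchi\bigl(C_{\Li_{2n}^*(\F_q)}(\gen{g})\bigr),
\end{equation*}
where the summand is the reduced \Euc\ of the subposet of $g$-invariant nonzero totally isotropic subspaces. I would then invoke the standard averaging identity
\begin{equation*}
\frac{1}{|G|}\sum_{g \in G} \rchi(P^g) = \rchi(P/G),
\end{equation*}
valid for any finite group $G$ acting order-preservingly on a finite poset $P$. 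The justification is a double count: since the $\Sp{2n}q$-action preserves strict inclusions of subspaces, any group element stabilising a chain $U_0 \subsetneq U_1 \subsetneq \cdots \subsetneq U_k$ must fix each $U_i$ individually, so the $g$-fixed $k$-simplices of the order complex are precisely the $k$-chains in $P^g$. Counting pairs (element, fixed simplex) and applying orbit--stabiliser gives $\sum_g \chi(\Delta(P^g)) = |G|\,\chi(\Delta(P/G))$; subtracting $1$ from both sides yields the reduced form.

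Next, I would identify $\Li_{2n}^*(\F_q)/\Sp{2n}q$. Witt's theorem for symplectic spaces implies that $\Sp{2n}q$ acts transitively on the totally isotropic subspaces of each fixed dimension $d \in \{1,\dots,n\}$, and that any pair of such subspaces with one contained in the other can be placed inside a common maximal isotropic of dimension $n$. Hence the orbit poset is the totally ordered chain $1 < 2 < \cdots < n$ indexed by dimension. Its order complex is an $(n-1)$-simplex, which is contractible, so its reduced \Euc\ vanishes and
\begin{equation*}
-\rchi_1(\Sp{2n}q) = -\rchi\bigl(\Li_{2n}^*(\F_q)/\Sp{2n}q\bigr) = 0.
\end{equation*}

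The only non-routine ingredient is the averaging identity, which rests on the rigidity of an order-preserving action on chains; the rest is a direct application of Witt's theorem together with the contractibility of a simplex. I expect no real obstacle here, the main content being conceptual rather than computational.
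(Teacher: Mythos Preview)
Your approach and the paper's share the reduction of $\rchi_1$ to the reduced Euler characteristic of the quotient. The paper cites \cite[Proposition~2.13]{jmm:eulercent} for this step and then invokes Webb's theorem \cite[Proposition~8.2.(i)]{webb87} to conclude the quotient has vanishing $\rchi$; you instead identify the quotient explicitly as an $(n-1)$-simplex via Witt's theorem. Your route is more self-contained and elementary, avoiding the machinery behind Webb's result; the paper's route is shorter by deferring to known theorems.

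One caveat: the identity $\frac{1}{|G|}\sum_{g} \rchi(P^g) = \rchi(P/G)$ is \emph{not} valid for an arbitrary order-preserving $G$-poset when $P/G$ denotes the orbit poset. Your Burnside double-count produces the reduced Euler characteristic of the simplicial quotient $\Delta(P)/G$ (the alternating sum over $G$-orbits of chains), and in general $\Delta(P)/G$ differs from $\Delta(P/G)$. In the case at hand they do coincide, precisely because $\Sp{2n}q$ acts transitively on isotropic flags of each fixed dimension vector --- this is exactly the flag-transitivity consequence of Witt you invoke later --- so $G$-orbits of chains in $P$ biject with chains in $P/G$. State this as a property of the particular action rather than as a general fact, and your argument is complete.
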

\begin{proof}
  The first equivariant reduced \Euc\ is the usual reduced \Euc\ of
  the orbit space $|\Li_{2n}^*(\F_q)|/\Sp{2n}q$ for the action of
  $\Sp{2n}q$ on its building (Lemma~\ref{lemma:chi1}).
  Webb's theorem \cite[Proposition~8.2.(i)]{webb87} says that the
  reduced \Euc\ of this orbit space equals $0$.
\end{proof}


\begin{lemma}\cite{quillen78}\label{lemma:contrability}
  If $A$ is an abelian subgroup of $\Sp{2n}q$ and $\gcd(|A|,q)>1$, then
 $\rchi_r(C_{\Li_{2n}^*(\F_q)}(A),C_{\Sp{2n}q}(A))=0$ for all $r
   \geq 1$.
\end{lemma}
\begin{proof}
  It suffices to show that $\rchi(C_{\Li_{2n}^*(\F_q)}(A))=0$ for
  any abelian subgroup $A$ of $\Sp{2n}q$ with $O_s(A) \neq 1$ where $s$ is
  the characteristic of the field $\F_q$. We may replace the poset
  $\Li^*_{2n}(\F_q)$ by the poset $\cat S{\Sp{2n}q}{s+*}$ of
  non-trivial $s$-subgroups of $\Sp{2n}q$
  \cite[Theorem~3.1]{quillen78}. The fixed poset $C_{\cat
    S{\Sp{2n}q}{s+*}}(A)$ admits the conical contraction $B
  \leq BO_s(A) \geq O_s(A)$ defined for all $B \in C_{\cat
    S{\Sp{2n}q}{s+*}}(A)$.
\end{proof}

\begin{lemma} 
  \label{lemma:recur}
  For $n \geq  1$ and $r \geq 1$, the $(r+1)$th equivariant \Euc\ of the $\Sp{2n}q$-poset
  $\Li_{2n}^*(\F_q)$ is
  \begin{equation*}
    \rchi_{r+1}(\Sp {2n}q) = \sum_{\substack{X \in \Hom{\Z}{\Sp{2n}q}/\Sp{2n}q \\ \GCD{q, |X(\Z)|}=1}}
    \rchi_{r}(C_{\Li_{2n}^*(\F_q)}(X), C_{\Sp{2n}q}(X))
  \end{equation*}
  where the sum ranges over semisimple conjugacy classes in $\Sp{2n}q$.
\end{lemma}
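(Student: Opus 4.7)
The plan is to derive the recursion by unwinding Definition~\ref{defn:chir} for $r+1$, parametrizing homomorphisms $\Z^{r+1}\to \Sp{2n}q$ by their value on the first generator, and then invoking Lemma~\ref{lemma:contrability} to restrict to $p$-regular (equivalently, semisimple) classes.

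\medskip

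\textbf{Step 1: Reparametrize.} A homomorphism $X\colon \Z^{r+1}\to \Sp{2n}q$ is the same data as a pair $(g,Y)$ where $g\in \Sp{2n}q$ is the image of the first generator and $Y\colon \Z^r\to C_{\Sp{2n}q}(g)$ is a homomorphism from the free abelian group on the remaining $r$ generators into the centralizer of $g$ (the commuting condition forces $Y$ to land in $C_{\Sp{2n}q}(g)$). Moreover, the induced subposet $C_{\Li_{2n}^*(\F_q)}(X(\Z^{r+1}))$ of subspaces fixed by the whole image equals $C_{C_{\Li_{2n}^*(\F_q)}(g)}(Y(\Z^r))$.

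\medskip

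\textbf{Step 2: Collect over conjugacy classes.} Substituting this reparametrization into Definition~\ref{defn:chir},
\begin{equation*}
  \rchi_{r+1}(\Sp{2n}q)=\frac{1}{|\Sp{2n}q|}\sum_{g\in \Sp{2n}q}\sum_{Y\in \Hom{\Z^r}{C_{\Sp{2n}q}(g)}}\rchi\bigl(C_{C_{\Li_{2n}^*(\F_q)}(g)}(Y(\Z^r))\bigr).
\end{equation*}
Grouping elements $g$ into conjugacy classes $[g]\in[\Sp{2n}q]$ and using the orbit-stabilizer formula $|[g]|=|\Sp{2n}q|/|C_{\Sp{2n}q}(g)|$, the outer average collapses to
\begin{equation*}
  \rchi_{r+1}(\Sp{2n}q)=\sum_{[g]\in[\Sp{2n}q]}\frac{1}{|C_{\Sp{2n}q}(g)|}\sum_{Y\in \Hom{\Z^r}{C_{\Sp{2n}q}(g)}}\rchi\bigl(C_{C_{\Li_{2n}^*(\F_q)}(g)}(Y(\Z^r))\bigr).
\end{equation*}
By Definition~\ref{defn:chir} applied to the $C_{\Sp{2n}q}(g)$-poset $C_{\Li_{2n}^*(\F_q)}(g)$, the inner expression is precisely $\rchi_r(C_{\Li_{2n}^*(\F_q)}(g),C_{\Sp{2n}q}(g))$.

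\medskip

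\textbf{Step 3: Kill the non-semisimple classes.} This yields an unrestricted sum $\rchi_{r+1}(\Sp{2n}q)=\sum_{[g]}\rchi_r(C_{\Li_{2n}^*(\F_q)}(g),C_{\Sp{2n}q}(g))$. Writing $q=p^a$, an element of $\Sp{2n}q$ is semisimple exactly when its order is prime to $p$, which is the same as $\GCD(q,|g|)=1$. By Lemma~\ref{lemma:contrability} every summand with $p\mid|g|$ vanishes, so the sum can be restricted to semisimple conjugacy classes, yielding the stated formula.

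\medskip

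No step presents a genuine obstacle: the reparametrization is the standard manipulation behind equivariant Euler characteristics, and the vanishing on non-semisimple classes is exactly what Lemma~\ref{lemma:contrability} was proved for. The only point that deserves care is the identification of the inner sum with $\rchi_r(C_{\Li_{2n}^*(\F_q)}(g),C_{\Sp{2n}q}(g))$, which requires noting that the poset of $g$-invariant isotropic subspaces is acted on by $C_{\Sp{2n}q}(g)$ and that taking $Y(\Z^r)$-invariants inside it matches the condition in the definition.
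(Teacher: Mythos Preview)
Your proof is correct and follows the same approach as the paper: the paper simply cites the general recursion $\rchi_{r+1}=\sum_{[g]}\rchi_r(C_\Pi(g),C_G(g))$ from \cite[Proposition~2.9]{jmm:eulercent} and then invokes Lemma~\ref{lemma:contrability} to discard the non-semisimple classes, whereas you spell out that recursion directly via the reparametrization and orbit--stabilizer argument. Your Steps~1--2 are exactly the standard proof of the cited proposition, so nothing is lost or gained beyond self-containment.
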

\begin{proof}
  This is a special case of the general formula 
  from Lemma~\ref{lemma:CgP}. By
  Lemma~\ref{lemma:contrability}, we need only the conjugacy classes
  of order prime to $q$ (semisimple classes).
\end{proof}

The centraliser of the semisimple element $g$ of $\Sp {2n}{q}$ with
characteristic \pol\ as in Proposition~\ref{prop:srfact} is
 \cite{fong_srinivasan89} \cite[(3.3)]{vinroot2017}
\begin{equation}\label{eq:vinroot}
  C_{\Sp{2n}q}(g) = \Sp{a_-}{q} \times \Sp{a_+}{q} \times
    \prod_i \GU{m_i^-}{{q^{\frac{1}{2}d_i^-}}} \times \prod_j \GL{m_j^+}{{q^{d_j}}}
  \end{equation}
  and the contribution,
  $-\rchi_{r}(C_{\Li_{2n}(\F_q)^*}(g), C_{\Sp{2n}q}(g))$, to the sum
  $-\rchi_{r+1}(\Sp {2n}q)$ of Lemma~\ref{lemma:recur} from $g$ is
 \begin{equation*}
    -\rchi_r(\Sp{a^-}{q}) \times -\rchi_r(\Sp{a^+}{q}) \times
    \prod_i -\rchi_r(\GU{m_i^-}{{q^{\frac{1}{2}d_i^-}}}) \times
    \prod_j \rchi_r(\GL{m_j^+}{{q^{d_j}}})
  \end{equation*}
  with a sign change similar to that of
  \cite[Lemma~4.3]{jmm:eulergl-}.  The characteristic \pol\ induces a
  bijection between the set of semisimple classes in $\Sp{2n}{q}$ and
  the set of self-reciprocal \pol s of degree $2n$
  \cite[\S3.1]{vinroot2017} \cite{wall63}. We conclude from these
  facts that $-\rchi_{r+1}(\Sp{2n}q)$ equals
  \begin{equation}\label{eq:basicrec}
    \begin{aligned}
      \sum_{(a^-,a^+,\lambda^-,\lambda^+) } &
    (-\rchi_r(\Sp{2a^-}q))(- \rchi_r(\Sp{2a^+}q)) \times\\
  \prod_{\substack{d^- \\ \exists m^- \colon (m^-,d^-)^{e(m^-,d^-)}
      \in \lambda^- }} &
  \binom{\SRIM -{d^-}q}{[e(m^-,d^-) \colon (m^-,d^-)^{e(m^-,d^-)} \in \lambda^-]}
  \prod_{(m^-,d^-)^{e(m^-,d^-)} \in \lambda^-}
  -\rchi_r(\GU{m^-}{q^{\frac{1}{2}d^-}})^{e(m^-,d^-)}  \times \\
  \prod_{\substack{d^+  \\ \exists m^+ \colon (m^+,d^+)^{e(m^+,d^+)} \in \lambda^+}}
  &\binom{\SRIM +{d^+}q}{[e(m^+,d^+) \colon (m^+,d^+)^{e(m^+,d^+)}
    \in \lambda^+]}
  \prod_{(m^+,d^+)^{e(m^+,d^+)} \in \lambda^+}
  \rchi_r(\GL{m^+}{q^{d^+}})^{e(m^+,d^+)} 
    \end{aligned}
  \end{equation}
where the sum runs over all $(a^-,a^+,\lambda^-,\lambda^+)$ where
$a^\pm$ are positive integers,
$\lambda^\pm = \{(m_i^\pm,d_i^\pm)^{e_i^{\pm}}\}$ are multisets of
pairs of positive integers such that
$a^-+a^++\sum m_i^-d_i^-e_i^- + \sum m_j^+d_j^+e_j^+=n$ and the
$d_i^-$ are even. 

We are here using multinomial coefficients as defined below. 

\begin{defn}\label{defn:mnc}
For a rational \pol\ $m \in \Q[q]$  and $k_1,\ldots,k_s \geq 0$ a finite
sequence of nonnegative integers,
define the multinomial coefficients to be
\begin{align*}
  &\binom m{k_1,\ldots,k_s} =
    \frac{m(m-1) \cdots (m+1-\sum k_i)}{k_1! \cdots k_s! } =
    \begin{cases}
     \frac{m!}{k_1! \cdots k_s! \cdot (m-\sum k_i)!}
      & \sum k_i \leq m \\
      0 & \sum k_i > m
    \end{cases}
  \\
  & \binom m{-k_1,\ldots,-k_s}  
  = (-1)^{k_1+\cdots+k_s} \binom m{k_1,\ldots,k_s} \\
  &\binom {-m}{-k_1,\ldots,-k_s} =
    \frac{m(m+1) \cdots (m-1+\sum k_i)}{k_1! \cdots k_s!} = \binom{m-1+\sum k_i}{k_1,\ldots,k_s}
\end{align*}
\end{defn}

Equation~\eqref{eq:recur} below uses $T_S$-transformed generating
functions (Definition~\ref{defn:TSa}) to succinctly express the recurrence
relation for $\rchi_{r+1}(\Sp{2n}q)$.

\begin{defn}\label{defn:MSp}
  $M_n=\{\lambda\}$, $n \geq 0$, is the set of all 
   multisets of pairs of natural numbers
  \begin{equation*}
   \lambda=\{(m_1,d_1)^{e(m_1,d_1)},  \ldots, (m_t,d_t)^{e(m_t,d_t)}\}
  \end{equation*}
  with $\sum_i m_id_ie(m_i,d_i)=n$.
\end{defn}

The first of the sets $M_n$ are
 \begin{align*}
   M_0 &= \emptyset, \quad
   M_1 = \{ \{(1,1)\} \}, \quad
   M_2 = \{ 
    \{ (1, 2) \},
    \{ (2, 1) \},
         \{ (1, 1)^2 \} \\
   M_3 &= \{
         \{ (1, 3) \},
    \{ (3, 1) \},
    \{ (1, 1), (1, 2) \},
    \{ (1, 1), (2, 1) \},
         \{ (1, 1)^3 \} \} \\
   M_4 &= \{
          \{ (1, 1), (1, 3) \},
         \{ (1, 1)^4 \},
         \{ (1, 4) \},
    \{ (1, 1), (3, 1) \},
    \{ (1, 1)^2, (1, 2) \},
    \{ (1, 2)^2 \}, \\
    &\qquad\{ (2, 1)^2 \},
    \{ (2, 2) \},
    \{ (1, 1)^2, (2, 1) \},
    \{ (1, 2), (2, 1) \},
    \{ (4, 1) \} \}
 \end{align*}
 

 Let $S=(S(n))_{n \geq 1}$ and $a=(a(n))_{n \geq 1}$ by sequences of
 integral \pol s $S(n),a(n) \in \Z[q]$.
 
  \begin{defn}\cite[Definition 3.1]{jmm:eulergl+}\label{defn:TSa}
    The $S$-transform  of $a$, $T_S(a)$,  is the \pol\ sequence with
   \begin{equation*}
      T_S(a)(n)(q) =           
  \sum_{\lambda \in M_n}  
  \prod_{\{d \mid  \exists m \colon (m,d) \in B(\lambda) \}}
  \binom{S(d)(q)}{[E(\lambda,(m,d)) \colon (m,d) \in
    B(\lambda)]}  \prod_{(m,d) \in B(\lambda)} a(m)(q^d)^{E(\lambda,(m,d))}
   \end{equation*}
as its $n$th term for every $n \geq 1$.
\end{defn}

Using the concept of $T_S$-transforms we may express Lemma~\ref{lemma:recur} or
\eqref{eq:basicrec} by the
recurrence
\begin{equation}
  \label{eq:recur}
  \FSp {r+1}(x) =
  \begin{cases}
\FSp {r}(x)^2
  T_{\SRIM -{}q}(\FGL{-}r(x))
  T_{\SRIM +{}q}(\FGL{+}r(x)) & \text{$q$ odd} \\
  \FSp {r}(x)
  T_{\SRIM -{}q}(\FGL{-}r(x))
  T_{\SRIM +{}q}(\FGL{+}r(x)) & \text{$q$ even} 
  \end{cases}
\qquad (r \geq 1)    
\end{equation}
where the generating functions $\FGL{\pm}r(x)$ of
\cite[(1.3)]{jmm:eulergl+} and \cite[(1.2)]{jmm:eulergl-} have been
transformed relative to the \pol\ 
sequences $(\SRIM {\pm}dq)_{d \geq 1}$. We now start the computation
of the product of these two transformed generating functions. First, a
well-known lemma:

\begin{lemma}\cite[ p 258]{wall99}\label{lemma:IM}
  $\displaystyle T_{\IM {}q}(1-x) = \frac{1-qx}{1-x}$.
\end{lemma}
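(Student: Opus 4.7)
The plan is to unpack Definition~\ref{defn:TSa} to rewrite $T_{\IM{}q}(1-x)$ as an infinite product in $x$, and then evaluate that product via unique factorization of polynomials over $\F_q$.

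For the polynomial sequence $a$ whose generating function is $1-x$ we have $a(1)=-1$ and $a(m)=0$ for $m\geq 2$. Hence in the formula of Definition~\ref{defn:TSa} only multisets $\lambda \in M_n$ whose pairs all have first component $m=1$ can contribute. Encoding such a $\lambda$ by the multiplicities $e(d):=E(\lambda,(1,d))$, the transform collapses to
\[
  T_{\IM{}q}(1-x)(n) \;=\; \sum_{\sum_d d\cdot e(d)=n}\;\prod_{d\geq 1}\binom{\IM dq}{e(d)}(-1)^{e(d)},
\]
which I recognize as the coefficient of $x^n$ in the binomial-series product
\[
  T_{\IM{}q}(1-x) \;=\; \prod_{d \geq 1}(1-x^d)^{\IM dq}.
\]

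To evaluate this product, I pass to its reciprocal. By unique factorization of monic polynomials in $\F_q[x]$ with nonzero constant term into irreducibles with nonzero constant term (of which there are $\IM dq$ of each degree $d$), the series $\prod_{d \geq 1}(1-x^d)^{-\IM dq}$ is the generating function that enumerates monic polynomials in $\F_q[x]$ with nonzero constant term by degree. There are exactly $q^n-q^{n-1}$ such polynomials of degree $n \geq 1$ (free choice of the $n-1$ intermediate coefficients in $\F_q$ and of the constant term in $\F_q^\times$), so
\[
  \prod_{d \geq 1}(1-x^d)^{-\IM dq} \;=\; 1+\sum_{n \geq 1}(q^n-q^{n-1})x^n \;=\; \frac{1-x}{1-qx}.
\]
Inverting yields the claimed identity.

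The only non-routine ingredient is the first step, where the sum over multisets telescopes into an infinite product via the generalized multinomial theorem; everything else is elementary counting and standard manipulation of formal power series.
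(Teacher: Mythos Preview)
Your argument is correct. Both you and the paper first rewrite $T_{\IM{}q}(1-x)$ as the infinite product $\prod_{d\ge 1}(1-x^d)^{\IM dq}$; the paper simply cites \cite[(3.2)]{jmm:eulergl+} for this, whereas you derive it by unpacking Definition~\ref{defn:TSa}.

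Where the two diverge is in evaluating that product. The paper takes the logarithm and invokes \cite[Lemma~5.30]{jmm:eulergl+} together with the M\"obius-inverted identity $q^n-1=\sum_{d\mid n} d\,\IM dq$ to obtain $\exp\bigl(\sum_{n\ge 1}(1-q^n)x^n/n\bigr)=(1-qx)/(1-x)$. You instead take the reciprocal and read $\prod_{d\ge 1}(1-x^d)^{-\IM dq}$ as an Euler product: by unique factorization in $\F_q[x]$ it is the ordinary generating function for monic polynomials with nonzero constant term, which you sum directly to $(1-x)/(1-qx)$. Your route is more self-contained and combinatorial (no external lemmas, no $\exp/\log$ manipulation), while the paper's route is consistent with the $\exp$-formalism it uses throughout (e.g.\ Corollary~\ref{cor:expmain}, Lemma~\ref{lemma:varEulertrans}). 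The two are of course equivalent at bottom, since the divisor-sum identity for $\IM dq$ is itself a shadow of unique factorization.
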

\begin{proof}
  The $\IM {}q$-transform of $1-x$ is
  \begin{equation*}
  T_{\IM{}q}(1-x) \stackrel{\text{\cite[(3.2)]{jmm:eulergl+}}}{=}
\prod_{n \geq 1} (1-x^n)^{\IM nq} \stackrel{\text{\cite[Lemma
    5.30]{jmm:eulergl+}}}{=} \exp\big( \sum_{n \geq 1} (1-q^n)x^n/n \big) =
    \frac{1-qx}{1-x}
  \end{equation*}
  where we use the \Mb\ inverse, $q^n-1 = \sum_{d \mid n} \IM dq$, of
  the left equation of \eqref{eq:SRIM}.
\end{proof}


\begin{lemma}\label{lemma:ofir} 
  $\displaystyle
    T_{\SRIM -{}q}(1-x) T_{\SRIM +{}q}(1-x)=
    \begin{cases}
      \frac {1-qx}{(1-x)^2} & \text{$q$ odd} \\
       \frac{1-qx}{1-x} & \text{$q$ even} 
    \end{cases} $ and 
$\displaystyle
    T_{\SRIM -{}q}(1+x) T_{\SRIM +{}q}(1-x) = \frac{1}{1-x}
  $.
\end{lemma}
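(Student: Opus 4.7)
The plan is to reduce both identities to infinite product manipulations in the style of the proof of Lemma~\ref{lemma:IM}, and then collect the exponents of the factors $(1-x^N)$ and apply the counting identities of Lemmas~\ref{lemma:SRIMm},~\ref{lemma:IMSRIM},~\ref{lemma:Sm+Sp}.

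First I would establish the basic product formulas. Running through Definition~\ref{defn:TSa} with the sequence whose only nonzero term is $a(1)=\pm 1$, the only multisets $\lambda\in M_n$ that contribute are those with all entries of the form $(1,d)$; these are in bijection with partitions $n=\sum_d d\cdot e_d$, and the summand factors as $\prod_d\binom{S(d)}{e_d}(\pm 1)^{e_d}$. Reassembling as a generating function yields
\begin{equation*}
  T_S(1-x) = \prod_{n\geq 1}(1-x^n)^{S(n)}, \qquad
  T_S(1+x) = \prod_{n\geq 1}(1+x^n)^{S(n)},
\end{equation*}
exactly as in the proof of Lemma~\ref{lemma:IM}. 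With this in hand the first identity becomes
\begin{equation*}
  T_{\SRIM{-}{}q}(1-x)\,T_{\SRIM{+}{}q}(1-x)
  = \prod_{n\geq 1}(1-x^n)^{\SRIM{-}nq+\SRIM{+}nq},
\end{equation*}
so I would apply Lemma~\ref{lemma:Sm+Sp} termwise. For even $q$ the exponent is $\IM nq$ for every $n\geq 1$, and Lemma~\ref{lemma:IM} yields $\frac{1-qx}{1-x}$. For odd $q$ the exponent equals $\IM nq$ for $n>1$ and $\IM 1q-1$ for $n=1$, which produces an extra $(1-x)^{-1}$ factor and hence $\frac{1-qx}{(1-x)^2}$.

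For the second identity I would use $(1+x^n)=(1-x^{2n})/(1-x^n)$ to rewrite
\begin{equation*}
  T_{\SRIM{-}{}q}(1+x)\,T_{\SRIM{+}{}q}(1-x)
  = \prod_{n\geq 1} \frac{(1-x^{2n})^{\SRIM{-}nq}}{(1-x^n)^{\SRIM{-}nq-\SRIM{+}nq}}
  = \prod_{N\geq 1}(1-x^N)^{c_N(q)},
\end{equation*}
where $c_N(q)=\SRIM{+}Nq-\SRIM{-}Nq$ for $N$ odd and $c_N(q)=\SRIM{+}Nq-\SRIM{-}Nq+\SRIM{-}{N/2}q$ for $N$ even. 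I would then verify $c_N(q)=0$ for every $N\geq 2$ and $c_1(q)=-1$, separately for even and odd $q$. For odd $N>1$, Lemmas~\ref{lemma:SRIMm} and~\eqref{eq:IMSRIM} give $\SRIM{\pm}Nq=\tfrac12\IM Nq$, forcing $c_N=0$. For even $N$, Lemma~\ref{lemma:Sm+Sp} (valid since $N>1$) turns $\SRIM{+}Nq-\SRIM{-}Nq$ into $\IM Nq-2\SRIM{-}Nq$, and Lemma~\ref{lemma:IMSRIM} then rewrites this as $-\SRIM{-}{N/2}q$, which cancels the remaining term; this works identically for even and odd $q$. For $N=1$ I would just compute directly from the formulas at the top of Section~3: $\SRIM{+}1q-\SRIM{-}1q=-1$ in both parity cases. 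Collecting everything the product collapses to $(1-x)^{-1}$, as claimed.

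The main bookkeeping obstacle is the case analysis in the second identity, since every relevant lemma (\ref{lemma:SRIMm}, \ref{lemma:IMSRIM}, \ref{lemma:Sm+Sp}) branches on the parities of $q$ and $N$, and the $N=1$ exceptions must be tracked carefully to produce the final factor $(1-x)^{-1}$. Once the exponent computation is organised by the two cases $N=1$ versus $N\geq 2$, however, the cancellations are clean and uniform across the parities of $q$.
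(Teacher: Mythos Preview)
Your argument is correct and, for the first identity, identical to the paper's (combine the exponents via Lemma~\ref{lemma:Sm+Sp} and invoke Lemma~\ref{lemma:IM}).

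For the second identity the routes differ slightly in organisation. You expand the product directly and compute the exponent $c_N(q)$ of each $(1-x^N)$, checking $c_1=-1$ and $c_N=0$ for $N\geq 2$ via Lemmas~\ref{lemma:SRIMm}, \ref{lemma:IMSRIM}, \ref{lemma:Sm+Sp}. The paper instead factors
\[
  T_{\SRIM -{}q}(1+x)\,T_{\SRIM +{}q}(1-x)
  = T_{\SRIM -{}q}\Big(\frac{1+x}{1-x}\Big)\cdot\Big(T_{\SRIM -{}q}(1-x)\,T_{\SRIM +{}q}(1-x)\Big),
\]
evaluates the first factor separately (obtaining the labelled identity \eqref{eq:S^-} by the same product manipulation with $1+x=(1-x^2)/(1-x)$ and Lemma~\ref{lemma:IMSRIM}), and then quotes the already-proved first identity for the second factor. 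Both computations reduce to the same exponent bookkeeping and the same three lemmas; the paper's detour buys the intermediate formula \eqref{eq:S^-}, which is reused later in \S\ref{sec:other-pol-identities}, whereas your direct route is a bit leaner if one does not need that byproduct.
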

\begin{proof}
  Thanks to  the  identity of  Lemma~\ref{lemma:IM} it is easy to determine
  \begin{equation*}
    T_{\SRIM -{}q}(1-x) T_{\SRIM +{}q}(1-x)= T_{\SRIM -{}q + \SRIM +{}q}(1-x)
    \stackrel{\text{Lemma~\ref{lemma:Sm+Sp}}}{=}
    \begin{cases}
      (1-x)^{-1}T_{\IM {}q}(1-x) = \frac{1-qx}{(1-x)^2} & \text{$q$ odd} \\
      T_{\IM {}q}(1-x) = \frac{1-qx}{1-x} & \text{$q$ even} 
    \end{cases}
  \end{equation*}
  Observe that
  \begin{equation}
    \label{eq:S^-}
    T_{\SRIM -{}q} \Big(
      \frac{1-x}{1+x} \Big) =
      \begin{cases}
        \frac{1-qx}{1-x}  & 2 \nmid q \\
        1-qx & 2 \mid q
      \end{cases}
    \end{equation}
    because, as $1+x = \frac{1-x^2}{1-x}$,
    \begin{multline*}
      T_{\SRIM -{}q} \Big(
      \frac{1-x}{1+x} \Big) =
    \frac {T_{\SRIM -{}q}(1-x)^2}{T_{\SRIM -{}q}(1-x^2)} =
    \frac{\prod_m (1-x^m)^{2\SRIM -mq}}{\prod_m (1-x^{2m})^{\SRIM
        -mq}} \\=
     \prod_{ 2 \nmid m} (1-x^m)^{2\SRIM -mq} \times
     \prod_{ 2 \mid m} (1-x^m)^{2\SRIM -mq- \SRIM -{m/2}q} 
     \stackrel{\text{Lemma~\ref{lemma:IMSRIM}}}{=}
     \begin{cases}
       T_{\IM {}q}(1-x) = (1-x)^{-1}(1-qx) & \text{$q$ odd} \\
       (1-x)T_{\IM {}q}(1-x) =1-qx & \text{$q$ even}
     \end{cases}
   \end{multline*}
   Therefore
   \begin{equation*}
     T_{\SRIM -{}q}(1+x) T_{\SRIM +{}q}(1-x) =
      T_{\SRIM -{}q} \Big(
      \frac{1+x}{1-x} \Big) T_{\SRIM  -{}q}(1-x) T_{\SRIM  +{}q}(1-x)
      =\frac{1}{1-x}
      \end{equation*}
  for all prime powers $q$.
\end{proof}

From recurrence \eqref{eq:recur} and Lemma~\ref{lemma:ofir}, we get that the second
generating function is
\begin{equation*}
  \FSp 2(x) = T_{\SRIM -{}q}(\FGL{-}1(x)) T_{\SRIM +{}q}(\FGL{+}1(x)) =
  T_{\SRIM -{}q}(1+x) T_{\SRIM +{}q}(1-x) =
  \frac{1}{1-x}
\end{equation*}
as the first generating function is $\FSp 1(x)=1$ by Lemma~\ref{lemma:rchi1}.

\begin{lemma}\label{lemma:solrecur}  
  For all $r \geq 1$
  \begin{equation*}
    T_{\SRIM -{}q}(\FGL -{r+1}(x)) T_{\SRIM +{}q}(\FGL +{r+1}(x))
    \prod_{\substack{0 \leq j \leq r+1 \\ j \equiv r
        \bmod 2}} (1-q^jx)^{\binom{r+1}{j}} =
    \begin{cases}
      \displaystyle
      \prod_{\substack{0 \leq j \leq r \\ j \not\equiv r
          \bmod 2}} (1-q^jx)^{2\binom{r}{j}} & \text{$q$ odd} \\
      \displaystyle
      \prod_{\substack{0 \leq j \leq r \\ j \not\equiv r
          \bmod 2}} (1-q^jx)^{\binom{r}{j}} & \text{$q$ even} 
    \end{cases}
\end{equation*}
\end{lemma}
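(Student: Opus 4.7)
The plan is to substitute the explicit product formulas for $\FGL {\pm}{r+1}(q,x)$ derived in the companion papers \cite{jmm:eulergl+} and \cite{jmm:eulergl-} into the left-hand side, distribute the transforms $T_{\SRIM -{}q}$ and $T_{\SRIM +{}q}$ over the factors using their multiplicativity, and finally collect the resulting exponents of $(1-q^jx)$ to match the right-hand side via a Pascal-rule calculation.

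I would first establish that $T_S$ acts as a ring endomorphism of $\Z[q][[x]]$: directly from the definition one has $T_S(1-q^jx) = \prod_{d\geq 1}(1-q^{jd}x^d)^{S(d)}$, and multiplicativity $T_S(fg)=T_S(f)T_S(g)$ follows coefficient-wise. Next, substituting $q^jx$ for $x$ in the proofs of Lemma~\ref{lemma:IM} and equation~\eqref{eq:S^-} yields the shifted identities $T_{\IM {}q}(1-q^jx)=(1-q^{j+1}x)/(1-q^jx)$ and
\[
T_{\SRIM -{}q}\!\left(\frac{1-q^jx}{1+q^jx}\right)=\begin{cases}(1-q^{j+1}x)/(1-q^jx) & q\text{ odd},\\ 1-q^{j+1}x & q\text{ even}.\end{cases}
\]
Combining these as in the proof of Lemma~\ref{lemma:ofir} provides factor-level evaluations of the transforms at every base factor occurring in the product expansions of $\FGL {\pm}{r+1}(q,x)$.

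Then, substituting those explicit products into the left-hand side and applying the transforms factor-by-factor, I would expand the left-hand side as $\prod_j(1-q^jx)^{e_j}$ with explicit exponents $e_j$ depending on $r$ and on the parity of $q$. Multiplying through by the given product $\prod_{j\equiv r\bmod 2}(1-q^jx)^{\binom{r+1}{j}}$ and applying Pascal's identity $\binom{r+1}{j}=\binom{r}{j}+\binom{r}{j-1}$ consolidates the exponents; I would verify the match with the right-hand side in each parity case by checking that the resulting exponents reduce to $2\binom{r}{j}$ (odd $q$) or $\binom{r}{j}$ (even $q$) for $j\not\equiv r\bmod 2$ and $0\leq j\leq r$, and to $0$ otherwise.

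The main obstacle will be the bookkeeping in this final step: the parity of $q$ enters both through the shifted identity for $T_{\SRIM -{}q}((1-q^jx)/(1+q^jx))$ and through the parity-dependent forms of $\FGL {\pm}{r+1}(q,x)$ taken from the cited papers, and these two contributions must conspire to produce precisely the claimed case distinction on the right-hand side. Splitting the exponent sums according to the parity of $j-r$ and carefully invoking Pascal's identity should close the argument.
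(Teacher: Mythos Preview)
Your proposal is correct and follows essentially the same approach as the paper: substitute the explicit product formulas for $\FGL{\pm}{r+1}(x)$ from \cite{jmm:eulergl+,jmm:eulergl-}, use multiplicativity of the $T_S$-transforms to evaluate them factor by factor via Lemma~\ref{lemma:ofir} (applied with $q^jx$ in place of $x$), and collapse the resulting exponents using Pascal's identity. One minor remark: the product formulas for $\FGL{\pm}{r+1}(x)$ themselves do not depend on the parity of $q$; the entire case distinction on the right-hand side arises solely from the odd/even dichotomy in Lemma~\ref{lemma:ofir}, so your anticipated ``two contributions conspiring'' in the final bookkeeping is simpler than you fear.
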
  
\begin{proof}
  With the formulas for $\FGL{\pm}{r+1}(x)$ from \cite[Theorem
  1.4]{jmm:eulergl+} and \cite[Theorem 1.3]{jmm:eulergl-} as input we
  compute for odd $q$ that 
  \begin{multline*}
    T_{\SRIM -{}q}(\FGL -{r+1}(x)) T_{\SRIM +{}q}(\FGL +{r+1}(x)) = \\ 
    \frac{\displaystyle \prod_{\substack{0 \leq j \leq r \\ j \equiv r
          \bmod 2}}T_{\SRIM -{}q}(1+q^jx)^{\binom{r}{j}}}
    {\displaystyle \prod_{\substack{0 \leq j \leq r \\ j \not\equiv r
          \bmod 2}}T_{\SRIM -{}q}(1-q^jx)^{\binom{r}{j}}}
      \frac{\displaystyle \prod_{\substack{0 \leq j \leq r \\ j \equiv r
          \bmod 2}}T_{\SRIM +{}q}(1-q^jx)^{\binom{r}{j}}}
    {\displaystyle \prod_{\substack{0 \leq j \leq r \\ j \not\equiv r
          \bmod 2}}T_{\SRIM +{}q}(1-q^jx)^{\binom{r}{j}}} = \\
    \frac{\displaystyle \prod_{\substack{0 \leq j \leq r \\ j \not\equiv r
          \bmod 2}} (1-q^jx)^{2\binom{r}{j}}}
    {\displaystyle \prod_{\substack{0 \leq j \leq r \\ j \not\equiv r
          \bmod 2}} (1-q^{j+1}x)^{\binom{r}{j}}
      \prod_{\displaystyle\substack{0 \leq j \leq r \\ j \equiv r
          \bmod 2}}(1-q^jx)^{\binom{r}{j}}} = 
    \frac{\displaystyle\prod_{\substack{0 \leq j \leq r \\ j \not\equiv r
          \bmod 2}} (1-q^jx)^{2\binom{r}{j}}}
    {\displaystyle\prod_{\substack{0 \leq j \leq r+1 \\ j \equiv r
          \bmod 2}} (1-q^jx)^{\binom{r+1}{j}}}
  \end{multline*}
 by using properties of the $T_{\SRIM{\pm}{}q}$-transform \cite[Chp
   3]{jmm:eulergl+} and Lemma~\ref{lemma:ofir}. When $q$ is even, the
   computations are essentially identical.
\end{proof}

\begin{proof}[Proof of Theorem~\ref{thm:main}]
  The formula of Theorem~\ref{thm:main} is the solution to the
  recurrence \eqref{eq:recur} given the result of
  Lemma~\ref{lemma:solrecur}.
\end{proof}


\begin{proof}[Proof of Corollary~\ref{cor:expmain}]
  The logarithm of the $(r+1)$th generating function $\FSp{r+1}(q,x)$ is
  \begin{equation*}
    \sum_{\substack{0 \leq j \leq r \\ j
        \not\equiv r \bmod 2}} -\binom{r}{j}\log(1-q^jx) =
     \sum_{\substack{0 \leq j \leq r \\ j
        \not\equiv r \bmod 2}} \binom{r}{j} \sum_{n \geq 1}
    \frac{(q^jx)^n}{n}  =
    \sum_{n \geq 1} \sum_{\substack{0 \leq j \leq r \\ j
        \not\equiv r \bmod 2}} 2 \binom{r}{j} q^{nj} \frac{x^n}{2n} \\
    = \sum_{n \geq 1} ((q^n+1)^r-(q^n-1)^r) \frac{x^n}{2n}
  \end{equation*}
  \end{proof}


The binomial formula applied to right hand side of
Theorem~\ref{thm:main} gives the more direct expression
\begin{equation}\label{eq:rchiSpdirect}
  \displaystyle
  -\rchi_{r+1}(\Sp{2n}q) =
  \sum_{\substack{n_0+\cdots+n_r=n \\ j \equiv r \bmod 2 \implies n_j
      =0}}
  \prod_{0 \leq j \leq r} (-1)^{n_j} 
  \binom{-\binom{r}{j}}{n_j}  q^{jn_j} \qquad (r \geq 1)
  \end{equation}
  where the sum ranges over all the
  $\binom{n+\lfloor \frac{1}{2}(r-1)\rfloor}{n}$ weak compositions
  $n_0 + \cdots + n_r$ of $n$ into $r+1$ parts \cite[p 15]{stanley97}
  with $n_j =0$ for all  $j \equiv r \bmod 2$.

Elementary properties of the binomial coefficients imply that the
generating functions satisfy the recurrence $\FSp 1(q,x) = 1$ and 
  \begin{equation*} 
    \FSp{r+1}(q,x) = \frac{\FSp r(q,qx)}{1-q^{r-1}x}   \prod_{1 \leq j  \leq r-1} \FSp{r-j}(q,q^{j-1}x) 
  \end{equation*}
for all $r \geq 1$.

\begin{rmk}\label{rmk:KRconj}
The (non-block-wise form of the) Kn\"orr--Robinson conjecture
\cite{knorr_robinson:89} \cite[\S3]{thevenaz93Alperin}
 for the group
$\Sp{2n}q$ relative to the characteristic $s$ of $\F_q$ states that
\begin{equation*}
  -\rchi_2(\Sp{2n}q) = z_s(\Sp{2n}q)
\end{equation*}
where
$z_s(\Sp{2n}q) = | \{ \chi \in \Irr{\C}{\Sp{2n}q} \mid |\Sp{2n}q|_s
 = \chi(1)_s \}| $ is the number of irreducible complex representations
of $\Sp{2n}q$ of $s$-defect $0$. As
$\FSp 2(q,x)=(1-x)^{-1} = 1+x+x^2+\cdots$, the left side is $1$ and so
is the right side \cite[Remark p 69]{humphreys06}. This confirms the
Kn\"orr--Robinson conjecture for $\Sp{2n}q$ relative to the defining
characteristic.
\end{rmk}

\section{Proof of Theorem~\ref{thm:FSpprimary}}
\label{sec:prim-equiv-reduc}

Let $p$ be a prime and, as in the previous sections, $q$ a prime
power. (The prime $p$ may or may not divide the prime power $q$
although it will soon emerge that $p \nmid q$ is the most interesting
case.)  In this section we discuss Tamanoi's $p$-primary equivariant reduced
\Euc s of the $\Sp{2n}q$-poset $\Li_{2n}^*(\F_q)$ of nonzero totally
isotropic subspaces.

$\Z_p$ denotes the ring of $p$-adic integers and $Z_p^r$ the product
$\Z \times \Z_p^{r-1}$ of one copy of the integers with $r-1$ copies
of the $p$-adic integers.

\begin{defn}\cite[(1-5)]{tamanoi2001}\label{defn:pprimeuc}
  The $r$th, $r \geq 1$, $p$-primary equivariant reduced \Euc\ of the
  $\Sp{2n}q$-poset $\Li_{2n}^*(\F_q)$ is the normalised sum
\begin{equation*}
  \rchi_r(p,\Sp{2n}q) = \frac{1}{|\Sp{2n}q|}
  \sum_{X \in \Hom{Z_p^r}{\Sp{2n}q}}\rchi(C_{\Li_{2n}^*(q)}(X(\Z \times \Z_p^{r-1})))
\end{equation*}
of reduced \Euc s.
\end{defn}

In this definition, the sum ranges over all commuting $r$-tuples
$(X_1,X_2,\ldots,X_r)$ of elements of $\Sp{2n}q$ such that the elements
$X_2,\ldots,X_r$ have $p$-power order. The first $p$-primary
equivariant reduced \Euc\ is independent of $p$ and agrees with the
first equivariant reduced \Euc .

The $r$th $p$-primary equivariant {\em unreduced\/} \Euc\
$\chi_r(p,\Sp{2n}q)$ agrees with the
\Euc\ of the homotopy orbit space $\B\!\Li_{2n}^*(\F_q)_{h\!\Sp{2n}q}$ computed in
Morava $K(r)$-theory at $p$
\cite{HKR2000}
\cite[Remark 7.2]{jmm:partposet2017}
\cite[2-3, 5-1]{tamanoi2001}.

For $r=1$, the $p$-primary equivariant reduced \Euc\ and the
equivariant reduced \Euc\ agree, $\rchi_1(p,\Sp{2n}q) =
\rchi_1(\Sp{2n}q)$, and for $r \geq 1$, as in Lemma~\ref{lemma:recur},
\begin{equation}\label{eq:precur}
  \rchi_{r+1}(p,\Sp{2n}q) = \sum_{[g] \in [\Sp{2n}q_p]}
  \rchi_r(p,C_{\Li_{2n}^*(\F_q)}(g), C_{\Sp{2n}q}(g)) \qquad (n \geq 1)
\end{equation}
where the sum runs over $p$-power order conjugacy classes
in the symplectic group.

The $r$th $p$-primary  generating function at $q$ is the integral power series
\begin{equation}\label{eq:primgenfct}
  \FSp r(p,q,x)  = 1- \sum_{ n \geq 1} \rchi_{r}(p,\Sp{2n}q) x^n \in \Z[[x]]
\end{equation}
associated to the sequence $(-\rchi_r(p,\Sp{2n}q))_{n \geq 1}$ of the
{\em negative\/} of the $p$-primary equivariant reduced \Euc s. We
have $\FSp 1(p,q,x) = \FSp 1(q,x) = 1$ and, directly from the
definition and Lemma~\ref{lemma:contrability}, $\FSp r(p,q,x) = 1$ for
all $r \geq 1$ when $p \mid q$. Thus we now restrict to the case where
$p$ does not divide $q$.

 \begin{defn}\cite[Definition 3.2]{lidlnieder97}\cite[Definition, Chp
  4, \S1]{irelandrosen90}\label{defn:ordf}
  Let $f \in \F_q[x]$ be a \pol\ with $f(0) \neq 0$. The order of $f$,
  $\ord{}f$, is the least positive integer $e$ for which $f(x) \mid
  x^e-1$.

  Let $a$ and $n$ be relatively prime integers. The multiplicative
  order of $a$ modulo $n$, $\ord na$, is the order of $a$ in the unit group
  $(\Z/n\Z)^\times$ of the modulo $n$ residue ring $\Z/n\Z$.
\end{defn}

\begin{defn}\label{defn:Apdq}
  For every integer $n \geq 1$, prime number $p$, and prime power $q$,
  \begin{itemize}
    \item $\IM n{p,q}$ is the number of Irreducible Monic $p$-power
      order \pol s $p(x)$ of degree
    $n$ over $\F_{q}$ with $p(0) \neq 0$
 \item $\SRIM -n{p,q}$  is the number of Self-Reciprocal Irreducible Monic
  $p$-power order \pol s $p(x)$ of even
  degree $2n$ over $\F_{q}$  
\item $\SRIM +d{p,q}$ is the number of unordered pairs
  $\{ p(x), p^*(x)/p(0) \}$ of irreducible monic $p$-power order \pol
  s $p(x)$ of degree $n$ over $\F_{q}$ with $p(0) \neq 0$ and
  $p(x) \neq p^*(x)/p(0)$
  \end{itemize}
\end{defn}

In degree $d=1$,  in particular, $\IM 1{p,q} = (q-1)_p$, represented
by the \pol s $x-\alpha$ with $\alpha$ in the \syl p of the unit group $\F_q^\times$, and
\begin{equation} \label{eq:SRIM1+pq}
  2\SRIM +1{p,q} =
  \begin{cases}
    (q-1)_2-2 & p = 2\\
    (q-1)_p-1 & p > 2
  \end{cases}
\end{equation}
as $x-\alpha$ is fixed if and only if $\alpha^2=1$.
By the $p$-version \cite[(4,7)]{jmm:eulergl+} of a classical identity
\cite[Theorem~3.25]{lidlnieder97}
and by the definition of $\SRIM{\pm}n{p,q}$, 
\begin{equation}\label{eq:pversion}
  \IM n{p,q} = \frac{1}{n} \sum_{d \mid n} \mu(d)(q^{n/d}-1)_p,
  \qquad
  \IM n{p,q} =
  \begin{cases}
    2 \SRIM +1{p,q} + \varepsilon & n=1 \\
    2 \SRIM +n{p,q} & \text{$n>1$ odd} \\
    2 \SRIM +n{p,q} + \SRIM -{n/2}{p,q} & \text{$n>1$ even}
  \end{cases}
\end{equation}
where $\varepsilon=2$ if $p=2$ and $\varepsilon=1$ if $p>2$.

\begin{lemma}\label{lemma:SRIMp}
  Assume $p \nmid q$.  Let $D=\ord p{q^2}$ and let $f \in \F_q[x]$
  be a self-reciprocal irreducible monic $p$-power order \pol\ of
  degree $2d$ for some $d \geq 1$. Then
  \begin{enumerate}
  \item $q^{d} \equiv -1 \bmod p^j$ for some $j \geq 1$
  \item $D \mid d$
  \item $f(x) \mid (x^{(q^{2d}-1)_p}-1)$ and $f(x) \mid (x^{q^d+1}-1)$
  \item $f(x) \mid (x^{(q^{d}+1)_p}-1)$ 
  \end{enumerate}
\end{lemma}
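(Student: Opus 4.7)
The plan is to translate every claim into a statement about a single root $\alpha \in \F_{q^{2d}}^\times$ of $f$. Since $f$ is irreducible of degree $2d$, we have $\F_q(\alpha) = \F_{q^{2d}}$, and the Galois conjugates of $\alpha$ over $\F_q$ are precisely $\alpha, \alpha^q, \ldots, \alpha^{q^{2d-1}}$. The hypothesis that $f$ has $p$-power order then translates into: the common multiplicative order of these roots in $\F_{q^{2d}}^\times$ is $p^j$ for some $j \geq 1$.

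The key step is to use self-reciprocality to pin down which conjugate equals $\alpha^{-1}$. By Lemma~\ref{lemma:fixmonrec}, $\alpha^{-1}$ is also a root of $f$, so $\alpha^{-1} = \alpha^{q^k}$ for a unique $0 \leq k < 2d$. Applying the $q$-Frobenius once more gives $\alpha = \alpha^{q^{2k}}$, which forces $2d \mid 2k$, so $k \in \{0, d\}$. The case $k = 0$ would give $\alpha^2 = 1$ and thus $\alpha \in \{\pm 1\}$, impossible since $\deg f = 2d \geq 2$. Hence $k = d$ and $\alpha^{q^d+1} = 1$.

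From here all four assertions fall out mechanically. Since the order of $\alpha$ is $p^j$ and divides $q^d+1$, we obtain $q^d \equiv -1 \pmod{p^j}$, which is (1). Reducing modulo $p$ yields $(q^2)^d \equiv 1 \pmod p$, so $D = \ord p{q^2}$ divides $d$ (with $p = 2$ trivial since $D = 1$), giving (2). For (3), the relation $\alpha^{q^d+1} = 1$ together with every conjugate satisfying it yields $f(x) \mid x^{q^d+1}-1$; on the other hand, $\alpha$ has $p$-power order $p^j$ and $p^j \mid q^{2d}-1$ (as $\alpha \in \F_{q^{2d}}^\times$), hence $p^j \mid (q^{2d}-1)_p$, giving $f(x) \mid x^{(q^{2d}-1)_p}-1$. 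For (4), note that $p^j \mid q^d+1$ combined with $p^j$ being a $p$-power yields $p^j \mid (q^d+1)_p$, so $\alpha^{(q^d+1)_p} = 1$ and therefore $f(x) \mid x^{(q^d+1)_p}-1$.

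The only nonroutine step is the short Galois-theoretic argument pinning down $k = d$ from self-reciprocality and the full degree of the Frobenius orbit; the rest is straightforward bookkeeping with $p$-power divisibilities once the relation $\alpha^{q^d+1} = 1$ is in hand.
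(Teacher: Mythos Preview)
Your argument is correct and in fact more self-contained than the paper's. The paper proceeds by assembling citations: Lidl--Niederreiter [Corollary~3.4] for $\operatorname{ord}(f)\mid q^{2d}-1$, Meyn [Theorem~1(i)] for $f(x)\mid x^{q^d+1}-1$, and then a $\gcd$ computation $\gcd\bigl((q^{2d}-1)_p,\,q^d+1\bigr)=(q^d+1)_p$ together with Lidl--Niederreiter [Corollary~3.7] to deduce (4); only afterwards is (1) extracted from the cyclotomic factorization of $x^{(q^d+1)_p}-1$. You instead work directly with a root $\alpha$ and reprove the key step of Meyn's theorem inline via the short Galois-orbit argument that forces $\alpha^{q^d}=\alpha^{-1}$; once $\alpha^{q^d+1}=1$ is in hand, all four parts drop out by elementary divisibility, with (4) following from $p^j\mid q^d+1\Rightarrow p^j\mid(q^d+1)_p$ rather than the paper's $\gcd$ route. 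Your approach is more transparent and avoids external references; the paper's approach is quicker if one takes Meyn's result as a black box. One minor remark: your appeal to Lemma~\ref{lemma:fixmonrec} for ``$\alpha^{-1}$ is a root of $f$'' is slightly indirect---what you actually need is the immediate fact that the roots of a self-reciprocal polynomial are closed under inversion, which follows from $f^*(x)=x^{\deg f}f(1/x)$.
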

\begin{proof}
  Let $f \in \F_q[x]$ be a self-reciprocal irreducible monic $p$-power
  order \pol\ of degree $2d$, $d \geq 1$.  Then
  $p \mid \ord{}f \mid q^{2d}-1$ by \cite[Corollary
  3.4]{lidlnieder97}. In other words, $q^{2d} \equiv 1 \bmod \ord{}f$,
  $q^{2d} \equiv 1 \bmod p$, and thus $d$ is a multiple of
  $D$. Moreover, $f(x) \mid (x^{(q^{2d}-1)_p}-1)$ by \cite[Lemma
  3.6]{lidlnieder97} as $\ord{}f \mid (q^{2d}-1)_p$, and
  $f(x) \mid (x^{q^d+1}-1)$ by \cite[Theorem 1.(i)]{meyn1990}. But
  then $f(x) \mid (x^{(q^{d}+1)_p}-1)$ by
  \cite[Corollary~3.7]{lidlnieder97} as $f(x)$ is irreducible and
  $\GCD{q^{2d}-1)_p,q^d+1} = \GCD{(q^{2d}-1)_p,(q^d+1)_p}
  =\GCD{(q^{d}-1)_p(q^d+1)_p,(q^d+1)_p} =(q^d+1)_p$.

  The irreducible factors of $x^{(q^d+1)_p}-1$ of degree $\geq 2$ are
  the irreducible factors of
  the cyclotomic \pol s $\Phi_{p^j}(x)$ where $j \geq 1$ and $p^j \mid
  q^d+1$. Thus $q^d \equiv -1 \bmod p^j$ for some $j \geq 1$.
\end{proof}


\begin{lemma}\label{lemma:SRIMmpfactor}
  Assume $p \nmid q$ and $d \geq 1$.  Each irreducible monic factor
  $f \in \F_q[x]$ of $x^{(q^n+1)_p}-1$, $n \geq 1$, of degree $2d \geq 2$
  is self-reciprocal, has $p$-power order, and $d \mid n$ with $n/d$
  odd where $\deg(f) = 2d$.
\end{lemma}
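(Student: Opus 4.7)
The plan is to work with a single root $\alpha \in \bar\F_q$ of $f$ and translate each of the three claims into an elementary congruence on the multiplicative order of $\alpha$.

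First I would establish that $\ord{}f$ is a $p$-power. Since $f$ divides $x^{(q^n+1)_p}-1$, the order of $f$ divides the $p$-power $(q^n+1)_p$, so $\ord{}f = p^j$ for some $j \geq 1$. Because $f$ is irreducible of degree $2d$ with root of order $p^j$, the standard identity $\deg f = \ord{p^j}q$ applies; the hypothesis $\deg f \geq 2$ therefore forces $q \not\equiv 1 \bmod p^j$, and in particular $p^j \geq 3$. This bound will be essential for the parity argument.

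Next, for self-reciprocity, I would pick a root $\alpha$ of $f$ in $\bar\F_q$. Then $\alpha$ has multiplicative order $p^j$, and $p^j \mid (q^n+1)_p \mid q^n+1$ yields $\alpha^{q^n+1}=1$, i.e.\ $\alpha^{q^n}=\alpha^{-1}$. Since $\alpha^{q^n}$ is a Galois conjugate of $\alpha$, the full set of roots of $f$ is closed under inversion. The monic irreducible polynomial $f^*(x)/f(0)$ has exactly the inverted roots, so $f^*(x)/f(0)=f(x)$; as $\deg f \geq 2$, Lemma~\ref{lemma:fixmonrec} then forces $f$ to be self-reciprocal (with $f(0)=1$).

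Finally, the congruence $q^n \equiv -1 \bmod p^j$ gives $q^{2n} \equiv 1 \bmod p^j$; combined with $\ord{p^j}q = 2d$ this yields $2d \mid 2n$, hence $d \mid n$. If $n/d$ were even, writing $n = 2md$ would give $q^n = (q^{2d})^m \equiv 1 \bmod p^j$, contradicting $q^n \equiv -1 \bmod p^j$ because $p^j \geq 3$. Thus $n/d$ is odd.

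The only subtle point is the last contradiction step: one must rule out $-1 \equiv 1 \bmod p^j$, which fails precisely when $p^j = 2$. This is the place where the degree hypothesis $2d \geq 2$ is used, since $p^j = 2$ would force $\ord{p^j}q = 1$ and hence $\deg f = 1$.
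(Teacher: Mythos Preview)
Your proof is correct. The paper's own proof is much terser: it observes that $f(x)\mid x^{(q^n+1)_p}-1\mid x^{q^n+1}-1$ and then simply invokes \cite[Theorem~1.(ii)]{meyn1990}, which packages together the self-reciprocity, the divisibility $d\mid n$, and the oddness of $n/d$ for irreducible factors of $x^{q^n+1}-1$ of degree $\geq 2$. You instead unpack this by working directly with a root $\alpha$ and the congruence $q^n\equiv -1\bmod p^j$, which is essentially how Meyn's theorem is proved. Your approach is more self-contained and makes explicit where the hypothesis $\deg f\geq 2$ enters (to rule out $p^j\leq 2$ and hence the degenerate case $-1\equiv 1$); the paper's approach is shorter but hides this behind the citation. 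One small point: you assert $j\geq 1$ without comment, but this is immediate since $j=0$ would force $f=x-1$ of degree~$1$.
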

\begin{proof}
  Suppose $f(x) \mid (x^{(q^{n}+1)_p}-1)$. Then $f(0) \neq 0$ and
  $f(x)$ has $p$-power order by \cite[Lemma 3.6]{lidlnieder97}. Since
  $f$ is irreducible of degree $\deg(f) \geq 2$ and
  $f(x) \mid x^{(q^n+1)_p}-1 \mid x^{q^n+1}-1$, $f(x)$ is
  self-reciprocal and $d=\frac{1}{2}\deg(f)$ divides $n$ with odd
  quotient $n/d$ by \cite[Theorem 1.(ii)]{meyn1990}.
\end{proof}


\begin{lemma}\label{lemma:SRIMmpq}
   Assume $p \nmid q$. For any $n \geq 1$,
  \begin{equation*}
    2n \SRIM -n{p,q} =
    \begin{cases}
      \sum\limits_{d \mid \frac{n}{n_2}} \mu(d)(q^{n/d}+1)_p & n \neq n_2 \\
      (q^n+1)_p-\varepsilon & n = n_2
    \end{cases}
  \end{equation*}
  where $\varepsilon=2$ if $p=2$ and $\varepsilon=1$ if $p>2$. For any
  {\em odd\/} $n \geq 1$,
  $\SRIM -{2^kn}{p,q} = 2^{-k}\SRIM -{n}{p,q^{2^k}}$ for all $k \geq 0$.
\end{lemma}
\begin{proof}
  Recall that the irreducible factors of the \pol\ $x^{q^n+1}-1$ are distinct and that there is
   one linear factor, $x+1$, of order $1$, when  $q$ is even and
   two, $x+1$, $x-1$, of order $1$ and $2$, if $q$ is odd.
   The \pol\ $x^{(q^n+1)_p}-1$ thus has $\varepsilon$ linear factors of $p$-power order
   where $\varepsilon=2$ if $q$ is odd and $p=2$ and $\varepsilon=1$ in all other cases.
  Lemma~\ref{lemma:SRIMmpfactor} and \Mb\ inversion thus imply that
  \begin{equation*}
    (q^n+1)_p =  
    \varepsilon + \sum_{\substack{d \mid n \\ \text{$n/d$ odd}} } 2d \SRIM -d{p,q}, \qquad
    2n \SRIM -n{p,q} = \sum_{\substack{d \mid n \\ \text{$d$ odd}} } \mu(d) ((q^{n/d}+1)_p - \varepsilon)
    \end{equation*}
    The first part of the lemma follows because $\sum_{d \mid
      \frac{n}{n_2}} \mu(d)$ is $1$ if $n=n_2$ is a power of $2$ and
    $0$ otherwise. If $n=2^km$, $k \geq 0$, $m \geq 1$ odd, then $2n \SRIM -n{p,q} = \sum_{d \mid m} \mu(d)(q^{2^km/d}+1)_p = 2m \SRIM -m{p,q^{2^k}}$.
\end{proof}

\begin{lemma} \label{lemma:SRIM+pq}
  Assume $p \nmid q$. For every odd $n \geq 1$ and $k \geq 0$,
  \begin{align*}
    &\IM {2^kn}{p,q} = 2^{-k} \IM n{p,q^{2^k}} - 2^{-k} \IM n{p,q^{2^{k-1}}} \\
     &\SRIM +{2^kn}{p,q} = 2^{-k} \SRIM +n{p,q^{2^k}} - 2^{-k}
     \SRIM +n{p,q^{2^{k-1}}} -2^{-k} \SRIM -n{p,q^{2^{k-1}}}
  \end{align*}
 \end{lemma}
 \begin{proof}
  Since $\mu(2^jn)=0$ for $j \geq 2$,
  \begin{align*}
    n2^k\IM {n2^k}{p,q} &= \sum_{d \mid n} \mu(n/d)(q^{2^kd}-1)_p +
    \sum_{d \mid n} \mu(2n/d)(q^{2^{k-1}d}-1)_p \\
    &= \sum_{d \mid n} \mu(n/d)(q^{2^kd}-1)_p - \sum_{d \mid n}
      \mu(2n/d)(q^{2^{k-1}d}-1)_p
      = n \IM n{p,q^{2^k}} - n \IM n{p,q^{2^{k-1}}}
  \end{align*}
for all $k \geq 0$. This proves the first assertion. Now,
\begin{align*}
  \SRIM +{2^kn}{p,q} &\stackrel{\eqref{eq:pversion}}{=}
                       2^{-1} \IM{2^kn}{p,q} - 2^{-1} \SRIM -{2^{k-1}n}{p,q}    \\& =
  2^{-k} (2^{-1}\IM n{p,q^{2^k}} - 2^{-1} \IM n{p,q^{2^{k-1}}}) -
                                                                                    2^{-1}
                                                                                    \SRIM
                                                                                    -{2^{k-1}n}{p,q} \\
                     &\stackrel{\eqref{eq:pversion}, L.~\ref{lemma:SRIMmpq}}{=}
  2^{-k} \SRIM +n{p,q^{2^k}} - 2^{-k} \SRIM +n{p,q^{2^{k-1}}}-
                                                                                    2^{-k} \SRIM -{n}{p,q^{2^{k-1}}}                     
\end{align*}
proves the second assertion.
\end{proof}

For all pairs $(p,q)$, where $p$ is a prime, $q$ is a prime power,
  and $p \nmid q$, and for all $r \geq 1$, the $p$-primary analogue of \eqref{eq:recur}, 
\begin{equation}
  \label{eq:FSppqrecur}
  \FSp{r+1}(p,q)(x) =
  \begin{cases}
  \FSp {r}(p,q)(x)^2
  T_{\SRIM -{}{p,q}}(\FGL{-}r(p,q,x))
  T_{\SRIM +{}{p,q}}(\FGL{+}r(p,q,x)) & p=2 \\
  \FSp {r}(p,q)(x)
  T_{\SRIM -{}{p,q}}(\FGL{-}r(p,q,x))
  T_{\SRIM +{}{p,q}}(\FGL{+}r(p,q,x)) & p > 2
  \end{cases} 
\end{equation}
is a consequence of recurrence \eqref{eq:precur}. Note here that
a semisimple $g
\in \Sp {2n}q$ has $p$-power order if and only if all the
irreducible factors in its characteristic \pol, described in
Proposition~\ref{prop:srfact}, have $p$-power order.
This is because multiplication by $x$ in the $\F_q[x]$-module
$\F_q[x]/(r(x))$, where $r(x)$ is irreducible with $r(0) \neq 0$, has
$p$-power order if and only if $r(x)$ has $p$-power order by
\cite[Lemma~3.5]{lidlnieder97}.
Also note that in Proposition~\ref{prop:srfact} with odd $q$, the
\pol\ $(x+1)$ has even order and is therefore allowed only when $p=2$.


The proof of Theorem~\ref{thm:FSpprimary} consists in verifying
that the solution to recurrence~\eqref{eq:FSppqrecur} satisfies the
infinite product expansion
\begin{equation*}
  \FSp r(p,q,x) = \prod_{n \geq 1}(1-x^n)^{c_r(p,q,n)} \qquad
    c_r(p,q,n) = \frac{1}{2n} \sum_{d \mid n}\mu(n/d)((q^d-1)_p^{r-1} - (q^d+1)_p^{r-1})
\end{equation*}
Since the infinite product expansions of $\FGL{\pm}r(p,q,x)$
are \cite[\S1]{jmm:eulergl+} \cite[\S1]{jmm:eulergl-}
  \begin{alignat*}{3}
    &\FGL +r(p,q,x) = \prod_{n \geq 1}(1-x^n)^{a_r^+(p,q,n)} &&\qquad
    &&a_r^+(p,q,n) = \frac{1}{n} \sum_{d \mid n}\mu(n/d)(q^d-1)_p^{r-1} \\
    &\FGL -r(p,q,x) = \prod_{n \geq 1}(1-x^n)^{a_r^-(p,q,n)}&&\qquad
    &&a_r^-(p,q,n) = \frac{1}{n} \sum_{d \mid n} (-1)^d \mu(n/d)(q^d-(-1)^d)_p^{r-1} 
  \end{alignat*}
  and \cite[(3.2)]{jmm:eulergl+}
  \begin{equation*}
    T_{\SRIM{\pm}{}{p,q}}\FGL{\pm}r(p,q,x) = \prod_{d \geq 1} \FGL{\pm}r(p,q^d,x^d)^{\SRIM{\pm}d{p,q}} = 
    \prod_{n,d \geq 1}
    (1-x^{nd})^{a_r^{\pm}(p,q^d,n) \SRIM{\pm}d{p,q}} 
  \end{equation*}
we must show that
  \begin{equation}\label{eq:crecur}
    c_{r+1}(p,q,N) =
    \begin{cases}
      2c_{r}(p,q,N) + \sum\limits_{d \mid N} a_r^-(p,q^d,N/d) \SRIM -d{p,q}  +
      \sum\limits_{d \mid N} a_r^+(p,q^d,N/d) \SRIM +d{p,q}
      & p=2 \\
      c_{r}(p,q,N) + \sum\limits_{d \mid N} a_r^-(p,q^d,N/d) \SRIM -d{p,q}  +
      \sum\limits_{d \mid N} a_r^+(p,q^d,N/d) \SRIM +d{p,q} & p>2
    \end{cases}
  \end{equation}
  for all $N \geq 1$ and all $r \geq 0$.

Theorem~\ref{thm:FSpprimary} will here be proved only for odd primes
$p$. The below proof can easily be modified to cover 
$p=2$.
  
 \begin{proof}[Proof of Theorem~\ref{thm:FSpprimary} for $p>2$]
   Assume $p>2$ and let $N \geq 1$ be an odd integer. Induction shows that
   \begin{align}
                \label{eq:aruniversal}                                  
                 &2^ka_r^{\pm}(p,q,2^kN) =
              \begin{cases}
                 a_r^+(p,q^2,N) - \frac{1}{2}
                a_r^{\pm}(p,q,N) & k=1 \\
                 a_r^+(p,q^{2^k},N) - 
                a_r^+(p,q^{2^{k-1}},N) & k>1
              \end{cases}
            \shortintertext{and}                                                                                
      \label{eq:cruniversal}
     &2^kc_r(p,q,2^kN) =
     \begin{cases}
       a_r^-(p,q,N) + a_r^+(p,q,N) & k=0 \\
       c_r(p,q^{2^k},N) -  c_r(p,q^{2^{k-1}},N) & k>0
     \end{cases}                                  
   \end{align}
Indeed,  $a_r^{\pm}(p,q,2N) = \frac{1}{2} a_r^+(p,q^2,N) -\frac{1}{2} a_r^{\pm}(p,q,N)$
    because
    \begin{align*}
      &
        \begin{aligned}[t]
         2N a_r^-(p,q,2N) &= \sum_{d \mid N} -\mu(2N/d)(q^d+1)^{r-1}_p +
      \sum_{d \mid N} \mu(N/d)(q^{2d}-1)^{r-1}_p \\ &=
      -\sum_{d \mid N} -\mu(N/d)(q^d+1)^{r-1}_p + \sum_{d \mid N} \mu(N/d)(q^{2d}-1)^{r-1}_p=
      -N a_r^-(p,q,N) + Na_r^+(p,q^2,N)          
        \end{aligned}
\\
      &2N a_r^+(p,q,2N) = \sum_{d \mid N} \mu(2N/d)(q^d-1)^{r-1}_p +
      \sum_{d \mid N} \mu(N/d)(q^{2d}-1)^{r-1}_p =
      -N a_r^+(p,q,N) + N a_r^+(p,q^2,N)
    \end{align*}
Since $\mu(2^jN)=0$ for $j \geq 2$,
 \begin{multline*}
   2^kNa_r^{\pm}(p,q,2^kN) =
  \sum_{d \mid N} \mu(N/d)(q^{2^kd}-1)_p^{r-1}  +
  \sum_{d \mid N} \mu(2N/d)(q^{2^{k-1}d}-1)_p^{r-1}
  = Na_r^+(p,q^{2^k},N) - Na_r^+(p,q^{2^{k-1}},N)
\end{multline*}
for all $k > 1$.
   Since $N$ is odd, 
    $c_r(p,q,N) =
    \frac{1}{2}a_r^-(p,q,N) + \frac{1}{2}a_r^+(p,q,N)$, and, as
    $\mu(2^jN)=0$ for $j \geq 2$, 
   $ 2^kNc_r(p,q^{2^k},N) = N c_r(p,q^{2^k},N) -N
   c_r(p,q^{2^{k-1}},N)$ when $k>0$.

   The first equality of the  below display holds (at $d=1$) because   
   $\SRIM -1{p,q} + \frac{1}{2} = \frac{1}{2}(q+1)_p$ by
   Lemma~\ref{lemma:SRIMmpq}. The next to last equality holds because
   \begin{equation*}
  \sum_{\{d_1 \colon f \mid d_1 \mid d_2\}} \mu(d_1/f) =
  \begin{cases}
    1 & f=d_2 \\ 0 & f < d_2 
  \end{cases}
\end{equation*}
contributes only when $f=d_2$. Remembering these observations we find that
    \begin{multline}\label{eq:1d1Nminus}
      \sum_{d \mid N} a_r^-(p,q^d,N/d) \SRIM -d{p,q} + \frac{1}{2}a_r^-(p,q,N) =
      \sum_{d \mid N} a_r^-(p,q^d,N/d)\frac{1}{2d} \sum_{f \mid d}
      \mu(f)(q^{d/f}+1)_p \\
      \begin{split}
      &=
      \sum_{d \mid N} a_r^-(p,q^d,N/d) \frac{1}{2d} \sum_{f \mid d} \mu(d/f)(q^f+1)_p =
      -\sum_{d \mid N} \frac{d}{N}\sum_{e \mid \frac{N}{d}}
      \mu(N/de)(q^{de}+1)^{r-1}_p \frac{1}{2d} \sum_{f \mid d}
      \mu(d/f)(q^f+1)_p \\ &=
      -\frac{1}{2N} \sum_{f \mid d_1 \mid d_2 \mid N} \mu(N/d_2) (q^{d_2}+1)_p^{r-1} (q^f+1)_p \mu(d_1/f) =
      -\frac{1}{2N} \sum_{d \mid N} \mu(N/d) (q^{d}+1)_p^{r} =
      \frac{1}{2} a_{r+1}^-(p,q,N)   
      \end{split}
    \end{multline}
    The first equality of the  below display holds (at $d=1$) because   
   $\SRIM +1{p,q} + \frac{1}{2} = \frac{1}{2}(q-1)_p =
    \frac{1}{2}\IM 1{p,q}$ by \eqref{eq:SRIM1+pq} and
    \eqref{eq:pversion}. For all odd $d>1$, $\SRIM +d{p,q} =
    \frac{1}{2} \IM d{p,q}$ by \eqref{eq:pversion}. Remembering these observations we find that
    \begin{multline}\label{eq:1d1Nplus}
      \sum_{d \mid N} a_r^+(p,q^d,N/d) \SRIM +d{p,q} + \frac{1}{2}a_r^+(p,q,N) =
      \sum_{d \mid N} a_r^+(p,q^d,N/d) \frac{1}{2}\IM d{p,q} \\
      \begin{split}
      &=
      \sum_{d \mid N} a_r^+(p,q^d,N/d) \frac{1}{2d} \sum_{f \mid d} \mu(f)(q^{d/f}-1)_p =
      \sum_{d \mid N} a_r^+(p,q^d,N/d) \frac{1}{2d} \sum_{f \mid d} \mu(d/f)(q^{f}-1)_p \\&=
      \sum_{d \mid N} \frac{d}{N}
      \sum_{e \mid N/d} \mu(N/de)(q^{de}-1)_p^{r-1}\frac{1}{2d} \sum_{f \mid d} \mu(d/f)(q^{f}-1)_p \\&=
      \frac{1}{2N} \sum_{f \mid d_1 \mid d_2 \mid N} \mu(N/d_2)(q^{d_2}-1)_p^{r-1}(q^{f}-1)_p \mu(d_1/f) =
      \frac{1}{2N} \sum_{d \mid N} \mu(N/d)(q^d-1)_p^r =
      \frac{1}{2} a_{r+1}^+(p,q,N)  
      \end{split}
    \end{multline}
    By adding \eqref{eq:1d1Nminus} and \eqref{eq:1d1Nplus} we get
    \begin{multline}
      \label{eq:proofN1}
      \sum_{d \mid N} a_r^-(p,q^d,N/d) \SRIM -d{p,q} +
      \sum_{d \mid N} a_r^+(p,q^d,N/d) \SRIM +d{p,q}
      \\ =
      \frac{1}{2}a_{r+1}^-(p,q,N) - \frac{1}{2}a_{r}^-(p,q,N)
      +\frac{1}{2}a_{r+1}^+(p,q,N) - \frac{1}{2}a_{r}^+(p,q,N)  
    \stackrel{\eqref{eq:cruniversal}}{=} c_{r+1}(p,q,N) -  c_r(p,q,N)  
    \end{multline}
    proving \eqref{eq:crecur}  for all odd $N$.

 Next consider $2N$, $N$ odd.  
 The expression $\sum_{d \mid 2N} a_r^-(p,q^d,2N/d) \SRIM -d{p,q} +
 \sum_{d \mid 2N} a_r^-(p,q^d,2N/d) \SRIM +d{p,q}$ is the sum of
 the four terms
    \begin{align*}
      \sum_{d \mid N} a_r^-(p,q^d,2N/d) \SRIM -d{p,q}
      &\stackrel{\eqref{eq:aruniversal}}{=}
        \frac{1}{2} \sum_{d \mid N} (a_r^+(p,q^{2d},N/d)-a_r^-(p,q^d,N/d)) \SRIM -d{p,q} \\
        \sum_{d \mid N} a_r^-(p,q^{2d},N/d) \SRIM -{2d}{p,q}
      &\stackrel{\text{L.~\ref{lemma:SRIMmpq}}}{=}
        \frac{1}{2} \sum_{d \mid N} a_r^-(p,q^{2d},N/d) \SRIM
        -d{p,q^2} \\
         \sum_{d \mid N} a_r^+(p,q^d,2N/d) \SRIM +d{p,q}
      &\stackrel{\eqref{eq:aruniversal}}{=}
         \frac{1}{2} \sum_{d \mid N} (a_r^+(p,q^{2d},N/d)-a_r^+(p,q^d,N/d)) \SRIM +d{p,q} \\
      \sum_{d \mid N} a_r^+(p,q^{2d},N/d) \SRIM +{2d}{p,q}
      &\stackrel{\text{L.~\ref{lemma:SRIM+pq}}}{=}
        \frac{1}{2} \sum_{d \mid N} a_r^+(p,q^{2d},N/d)
        (\SRIM +d{p,q^2} - \SRIM +d{p,q} - \SRIM -d{p,q})
    \end{align*}
 which is
 \begin{multline*}
    2^{-1}(\sum_{d \mid N} a_r^-(p,q^{2d},N/d) \SRIM
    -d{p,q^2}
    +\sum_{d \mid N} a_r^+(p,q^{2d},N/d) \SRIM
    +d{p,q^2}) \\
   -2^{-1}(\sum_{d \mid N} a_r^-(p,q^d,N/d) \SRIM -d{p,q}
   +\sum_{d \mid N} a_r^+(p,q^d,N/d) \SRIM +d{p,q}) \\
   \begin{split}
     &\stackrel{\eqref{eq:proofN1}}{=}
    -2^{-1}(c_{r+1}(p,q,N) - c_r(p,q,N))
    +2^{-1}(c_{r+1}(p,q^2,N) - c_r(p,q^2,N)) \\
    &\, \, \, \, =
2^{-1}(c_{r+1}(p,q^2,N)
    -c_{r+1}(p,q,N))
    -2^{-1}
    (c_{r}(p,q^2,N) - c_{r}(p,q,N)) \\
    &\stackrel{\eqref{eq:cruniversal}}{=}
    c_{r+1}(p,q,2N) -c_{r}(p,q,2N)
    \end{split}
  \end{multline*}
 This proves \eqref{eq:crecur} for $2N$,  $N$ odd.

Finally, we consider $2^kN$, $N$ odd, $k > 1$. 
We shall evaluate the sum
\begin{multline*}
   \sum_{d \mid 2^kN} a_r^-(p,q^d,2^kN/d) \SRIM -d{p,q} + 
    \sum_{d \mid 2^kN}  a_r^+(p,q^d,2^kN/d) \SRIM +d{p,q}
   \\= \sum_{0 \leq j \leq k} \sum_{d \mid N}
   a_r^-(p,q^{2^jd},2^{k-j}N/d) \SRIM -{2^jd}{p,q} +
   \sum_{0 \leq j \leq k} \sum_{d \mid N}  a_r^+(p,q^{2^jd},2^{k-j}N/d) \SRIM +{2^jd}{p,q}
\end{multline*}
which occurs on the right hand side of \eqref{eq:crecur}. For $j=0$ we get
\begin{equation*}
  \sum_{d \mid N} a_r^{\pm}(p,q^d,2^kN/d) \SRIM {\pm}d{p,q}
  \stackrel{\eqref{eq:aruniversal}}{=}
  2^{-k}\big(\sum_{d \mid N}
  (a_r^{+}(p,q^{2^kd},N/d)-a_r^{+}(p,q^{2^{k-1}d},N/d)) \SRIM
  {\pm}d{p,q} \big)
\end{equation*}
For $0 < j < k$ we get
\begin{align*}
&\begin{aligned}
     \sum_{d \mid N} a_r^{-}(p,q^{2^jd},2^{k-j}N/d) \SRIM {-}{2^jd}{p,q}
  &\stackrel{\text{L.~\ref{lemma:SRIMmpq}}}{=}
  2^{-j}\sum_{d \mid N} a_r^{-}(p,q^{2^jd},2^{k-j}N/d) \SRIM
  {-}{d}{p,q^{2^j}} \\
  &\stackrel{\eqref{eq:aruniversal}}{=}
  2^{-k}\sum_{d \mid N} (a_r^{+}(p,q^{2^kd},N/d)-a_r^{-}(p,q^{2^{k-1}d},N/d)) \SRIM
  {-}{d}{p,q^{2^j}} 
    \end{aligned}
\\
  &\begin{aligned}
    \sum_{d \mid N} a_r^{+}(p,&q^{2^jd},2^{k-j}N/d) \SRIM {+}{2^jd}{p,q}
  \\ & \stackrel{\text{L.~\ref{lemma:SRIM+pq}}}{=}
  2^{-j}\sum_{d \mid N} a_r^{+}(p,q^{2^jd},2^{k-j}N/d) (\SRIM
  {+}{d}{p,q^{2^j}} - \SRIM {+}{d}{p,q^{2^{j-1}}} - \SRIM {-}{d}{p,q^{2^{j-1}}})\\
  &\stackrel{\eqref{eq:aruniversal}}{=}
  2^{-k}\sum_{d \mid N}
  (a_r^{+}(p,q^{2^kd},N/d)-a_r^{+}(p,q^{2^{k-1}d},N/d))
  (\SRIM
  {+}{d}{p,q^{2^j}} - \SRIM {+}{d}{p,q^{2^{j-1}}} - \SRIM {-}{d}{p,q^{2^{j-1}}})
    \end{aligned}
\end{align*}
For $j=k$ we get
\begin{align*}
  &\sum_{d \mid N} a_r^{-}(p,q^{2^kd},N/d) \SRIM {-}{2^kd}{p,q}
  \stackrel{\text{L.~\ref{lemma:SRIMmpq}}}{=}
  2^{-k}\sum_{d \mid N} a_r^{-}(p,q^{2^kd},N/d) \SRIM
  {-}{d}{p,q^{2^k}} \\
  &\sum_{d \mid N} a_r^{+}(p,q^{2^kd},N/d) \SRIM {+}{2^kd}{p,q}
  \stackrel{\text{L.~\ref{lemma:SRIM+pq}}}{=}
  2^{-k}\sum_{d \mid N} a_r^{+}(p,q^{2^kd},N/d)
  (\SRIM {+}{d}{p,q^{2^k}} - \SRIM +d{p,q^{2^{k-1}}} - \SRIM -d{p,q^{2^{k-1}}}) 
\end{align*}
The sum of these $2(k+1)$ terms is
\begin{multline*}
  2^{-k}\sum_{d \mid N} a_r^{-}(p,q^{2^kd},N/d) \SRIM -d{p,q^{2^k}}
  +
  2^{-k}\sum_{d \mid N} a_r^{+}(p,q^{2^kd},N/d) \SRIM +d{p,q^{2^k}} \\
  -2^{-k}(\sum_{d \mid N} a_r^{-}(p,q^{2^{k-1}d},N/d) \SRIM -d{p,q^{2^{k-1}}}
  +
  \sum_{d \mid N} a_r^{+}(p,q^{2^{k-1}d},N/d) \SRIM +d{p,q^{2^{k-1}}})
  \\ 
  \begin{split}
    &\stackrel{\eqref{eq:proofN1}}{=}
    2^{-k}( c_{r+1}(p,q^{2^k},N) -  c_{r}(p,q^{2^k},N)) -
  2^{-k}( c_{r+1}(p, q^{2^{k-1}},N) - c_{r}(p, q^{2^{k-1}},N)) \\
  &\phantom{2}=2^{-k}( c_{r+1}(p,q^{2^k},N)- c_{r+1}(p,q^{2^{k-1}},N)) -
  2^{-k}( c_{r}(p,q^{2^k},N) - c_{r}(p,q^{2^{k-1}},N))
  \stackrel{\eqref{eq:cruniversal}}{=}
  c_{r+1}(p,q,2^kN) - c_{r}(p,q,2^kN)
  \end{split}
\end{multline*}
This proves \eqref{eq:crecur} for $2^kN$,  $N$ odd , $k > 1$.

We can now conclude that \eqref{eq:crecur} holds for all $N$ when the
prime $p$ is odd.
\end{proof}

Two prime powers prime to $p$
are declared to be $p$-equivalent if the 
generate the same closed subgroup
of the topological unit group $\Z_p^\times$ in $\Z_p$. More concretely, if we let
  \begin{equation*}
    O(p,q) =
    \begin{cases}
      (q \bmod 8, \nu_2(q^2-1)) & p=2 \\
      (\ord pq, \nu_p(q^{\ord pq}-1)) & p>2 
    \end{cases}
  \end{equation*}
  then the prime powers $q_1$ and $q_2$ are $p$-equivalent,
  $\overline{\gen{q_1}} = \overline{\gen{q_2}} \leq \Z_p^\times$, if and
  only if $O(p,q_1) = O(p,q_2)$ \cite[\S3]{bmo2}.
  The sequences $(\SRIM{\pm}d{p,q})_{d \geq1}$ and hence the power
  series $\FSp r(p,q,x)$ depend only on the $p$-class of $q$ when
  $p \nmid q$.

  For example,
  the $2$-classes are represented by the $2$-adic units $\pm 3^{2^e}$,
  $e \geq 0$, and the $3$-classes by the prime powers $2^{3^e}$ and
  $4^{3^e}$, $e \geq 0$ \cite[Lemma 1.11.(a)]{bmo2}.

  \begin{exmp}\label{exmp:FSpprim}
    For all $r \geq 1$ and $e \geq 0$
    \begin{align*}
      \FSp{r+1}(2,3^{2^e},x) &=
      \begin{cases}
        \displaystyle
       \frac{\prod_{n \geq 0}
          Q(x^{2^{n+1}})^{2^{(n+3)(r-1)}}}{Q(x)^{2^{2(r-1)}}(1+x)^{2^{r-1}}}
        & e=0 \\
        \displaystyle \frac{\prod_{n \geq 0} Q(x^{2^n})^{2^{(n+2)(r-1)+re}}}{(1-x)^{2^{r-1}}} & e>0 
      \end{cases}  \\
        \FSp{r+1}(2,-3^{2^e},x) &=
      \begin{cases}
        \displaystyle
           \frac{\prod_{n \geq -1}
             Q(x^{2^{n+1}})^{2^{(n+3)(r-1)}}}{(1-x)^{2^{r-1}}} & e=0\\
           \displaystyle
           \frac{\prod_{n \geq 0}
             Q(x^{2^{n+1}})^{2^{(n+3)(r-1)+re}}}{Q(x)^{2(r-1)+re}(1+x)^{2^{r-1}}} & e>0
      \end{cases}
    \end{align*}
    where $Q(x) = \frac{1-x}{1+x}$.  This follows from
    Theorem~\ref{thm:FSpprimary} after some power series
    manipulations as
    $\exp(-\sum_{n \geq 1} (2n)_2^r \frac{x^{n}}{n}) = \prod_{n \geq 0}
        Q(x^{2^n})^{2^{(n+1)(r-1)}}$, $\exp(-2 \sum_{n\geq 0} \frac{x^{2n+1}}{2n+1})=Q(x)$, and (in the case of $+3^{2^e}$)
    \begin{equation*}
       (3^n+1)_2 =
        \begin{cases}
          4 & 2 \nmid n \\ 2 & 2 \mid n
        \end{cases} \qquad
     (3^n-1)_2 =
      \begin{cases}
        2 & 2 \nmid n \\ 4n_2 & 2 \mid n
      \end{cases} \qquad
      ((3^{2^e})^n-1)_2 =
      \begin{cases}
        2^{2+e} & 2 \nmid n \\ 2^{2+e}n_2 & 2 \mid n
      \end{cases} \qquad
       ((3^{2^e})^n+1)_2 = 2
    \end{equation*}
    for all $e>0$.
  \end{exmp}

  \section{Proof of Theorem~\ref{thm:WSp}}
\label{sec:other-pres-equiv}

Recall that a (finite) multiset $\lambda$ is a (finite) base set
$B(\lambda)$ with a multiplicity function assigning an integer
$E(\lambda,b) \geq 0$ to every $b \in B(\lambda)$. The number
$|\lambda| = \sum_{b \in B(\lambda)} E(\lambda,b)$ is the cardinality
of the multiset $\lambda$. If the base set consists of natural numbers
$\geq 0$ with$\sum_{b \in B(\lambda)} bE(\lambda,b)=n$, $\lambda$ is
a {\em partition of $n$}, in symbols $\lambda \vdash n$. The multiset sum
$\lambda_1 + \lambda_2$ is the multiset with multiplicity function
$E(\lambda_1+\lambda_2,b) = E(\lambda_1,b) + E(\lambda_2,b)$. A {\em
  partition of $n$ into parts of two kinds} is a pair
  $(\lambda_-, \lambda_+)$ of multisets, $\lambda^-$ and $\lambda^+$,
  such that the multiset sum $\lambda_-+\lambda_+$ partitions $n$, in
  symbols $(\lambda_-, \lambda_+) \vdash n$.
  
\begin{lemma}\label{lemma:Bnq}
  Let $A(q) \in \Q[q]$ be a rational \pol\ in the indeterminate
  $q$. The \pol\ sequence $(B_n(q))_{n \geq 0}$ with $B_0(q)=1$ and
  \begin{equation*}
    B_n(q) = \sum_{\lambda \vdash n} (-1)^{|\lambda|} \prod_{d
      \in B(\lambda)} \frac{A(q^d)^{E(\lambda,d)}}{|C_d \wr
      \Sigma_{E(\lambda,d)}|}, \qquad n \geq 1,
  \end{equation*}
satisfies the recurrence $B_0(q)=1$ and
  $nB_n(q) + \sum_{1 \leq j \leq n} A(q^j) B_{n-j}(q) = 0$ for $n \geq
  1$.
\end{lemma}
\begin{proof}
  The claim is that
  \begin{equation*}
    n \sum_{\lambda \vdash n} (-1)^{|\lambda|} \prod_{d
      \in B(\lambda)} \frac{A(q^d)^{E(\lambda,d)}}{|C_d \wr
      \Sigma_{E(\lambda,d)}|} +
    \sum_{1 \leq j \leq n} A(q^j)\sum_{\mu \vdash n-j} (-1)^{|\mu|}
    \prod_{d \in B(\mu)} \frac{A(q^d)^{E(\lambda,d)}}{|C_d \wr
      \Sigma_{E(\lambda,d)}|} =0 
  \end{equation*}
  for all $n \geq 1$. Since, for all $d \in B(\lambda)$,
  \begin{equation*}
    A(q^d) \prod_{f \in B(\lambda -\{d\})}
    \frac{A(q^f)^{E(\lambda-\{d\})}}{|C_f \wr \Sigma_{E(\lambda
        -\{d\}}|} = d E(\lambda,d)
    \prod_{d
      \in B(\lambda)} \frac{A(q^d)^{E(\lambda,d)}}{|C_d \wr
      \Sigma_{E(\lambda,d)}|}
  \end{equation*}
  it suffices to show that
\begin{equation*}
  (-1)^{|\lambda|} n+ 
  \sum_{d \in B(\lambda)} (-1)^{|\lambda - \{d\}|}
  d E(\lambda,d) = 0
\end{equation*}
But this is obvious since $|\lambda - \{d\}| =
|\lambda|-1$ and
$\sum_{d \in B(\lambda)} d E(\lambda,d) = n$ as $\lambda$ partitions $n$.
\end{proof}

\begin{cor}\label{cor:Bnq} 
  For all $n \geq 1$ and $r \geq 0$,
  \begin{align*}
      -\rchi_{r+1}(\Sp{2n}{q}) &=
    \sum_{\lambda \vdash n} (-1)^{|\lambda|}
    \prod_{d \in B(\lambda)}
                                 \frac{((q^d-1)^r/2-(q^d+1)^r/2)^{E(\lambda,d)}}{|C_d
                                 \wr \Sigma_{E(\lambda,d)}|} \\
-\rchi_{r+1}(p,\Sp{2n}{q}) &=
    \sum_{\lambda \vdash n} (-1)^{|\lambda|}
    \prod_{d \in B(\lambda)} \frac{((q^d-1)_p^r/2-(q^d+1)^r_p/2)^{E(\lambda,d)}}{|C_d \wr \Sigma_{E(\lambda,d)}|}
  \end{align*}
\end{cor}
\begin{proof}
  Let $H(r,q)=\frac{1}{2}(q-1)^r -
  \frac{1}{2}(q+1)^r$ ($H(r,q)=\frac{1}{2}(q-1)_p^r -
  \frac{1}{2}(q+1)_p^r$ in the $p$-primary case). By Theorem~\ref{cor:expmain},  
 $\sum_{n \geq 0} -\rchi_{r+1}(\Sp{2n}q)x^n = \exp(-\sum_{n \geq 1} H(r,q^n)
  \frac{x^n}{n})$ (with the convention that $-\rchi_{r+1}(\Sp{0}q)
  = 1$ for all $r \geq 0$), so
  the sequence $-\rchi_{r+1}(\Sp nq)$,
  $n \geq 1$,  satisfies the recurrence
  \begin{equation*}
    n(-\rchi_{r+1}(\Sp{2n}q) + \sum_{1 \leq j \leq n}
    H(r,q^j)(-\rchi_{r+1}(\Sp{2(n-j)}q) = 0, \qquad n \geq 1,
  \end{equation*}
  according to \cite[Lemma~3.7]{jmm:eulergl+}.
  We can now apply
  Lemma~\ref{lemma:Bnq}. 
  \end{proof}


Let $F_q$ denote the standard Frobenius endo\m\ of the symplectic
algebraic group $\operatorname{Sp}_{2n}(\overline \F_s)$,
$s=\operatorname{char}(\F_q)$, with fixed points
$\operatorname{Sp}_{2n}(\overline \F_s)^{F_q} = \Sp{2n}q$.  The
standard maximal torus $T_n(\overline{\F}_s)$ of
$\operatorname{Sp}_{2n}(\overline \F_s)$, described for instance in
\cite[Exercises 10.19, 10.29]{malle-testerman2011}, is maximally split
with respect to $F_q$ \cite[Definition~21.13,
Example~21.14]{malle-testerman2011} and  the Weyl group $W(C_n)$ of
$\operatorname{Sp}_{2n}(\overline \F_s)$ acts as the standard
representation of the signed permutation group $C_2 \wr \Sigma_n$ in
the $n$-dimensional real vector space
$X(T_n(\overline{\F}_s)) \otimes \R$ spanned by the character group
$X(T_n(\overline{\F}_s))$.  As usual, $T_n(\overline{\F}_s)_w$ denotes
the $F_q$-stable maximal torus of
$\operatorname{GL}_n(\overline \F_s)$ corresponding to the Weyl group
element $w \in W(C_n)$ \cite[Proposition~25.1]{malle-testerman2011}.
The number of elements in $T_n(\overline{\F}_s)_w$ that are fixed by
the Frobenius endo\m\ $F_q$ is $|T_n(\overline{\F}_s)_w^{F_q}| =
\det(q-w^{-1})$ where the determinant is computed in
$X(T_n(\overline{\F}_s)) \otimes \R$ \cite[Proposition~25.3.(c)]{malle-testerman2011}.

 \begin{proof}[Proof of Theorem~\ref{thm:WSp}]
   Conjugacy classes in $W(C_n) = C_2 \wr \Sigma_n$ are in bijective
   correspondence with partitions, $(\lambda_-, \lambda_+)$, of $n$
   into parts of two kinds \cite[Chapter I, Appendix B]{macdonald2015}
   \cite[Theorem~3.5]{tamanoi2003}.  If $w \in W(C_n)$ is in
   the conjugacy class of $(\lambda^-,\lambda^+)$ then (cf.\
   \cite[Example~25.4.(2)]{malle-testerman2011})
   \begin{equation*}
     \det(w^{-1}) \det(q-w^{-1})^r = (-1)^{n+|\lambda^-|}
     \prod_{d^- \in B(\lambda^-)} (q^{d^-}-1)^{rE(\lambda^-,d^-)}
     \prod_{d^+ \in B(\lambda^+)} (q^{d^+}+1)^{rE(\lambda^+,d^+)}
   \end{equation*}
  The claim of the theorem is thus that
  \begin{equation}\label{eq:thmWSp}
    -\chi_{r+1}(\Sp{2n}{q}) = 
    \sum_{(\lambda^-,\lambda^+) \vdash n} (-1)^{|\lambda^-|}
    2^{-|\lambda^-+\lambda^+|}
    \prod_{d^- \in B(\lambda^-)}
    \frac{(q^{d^-}-1)^{rE(\lambda^-,d^-)}}{|C_{d^-} \wr
      \Sigma_{E(\lambda^-,d^-)}|}
    \prod_{d^+ \in B(\lambda^+)}
    \frac{(q^{d^+}+1)^{rE(\lambda^+,d^+)}}{|C_{d^+} \wr \Sigma_{E(\lambda^+,d^+)}|}
  \end{equation}
  as the group $W(C_n)$ contains
  \begin{equation*}
    \frac{|C_2\wr \Sigma_n|}
    {\prod_{d^- \in B(\lambda^-)}|(C_2 \times C_{d^-}) \wr
      \Sigma_{E(\lambda^-,d^-)}|
    \prod_{d^+ \in B(\lambda^+)}|C_{2d^+} \wr
      \Sigma_{E(\lambda^+,d^+)}|}
  \end{equation*}
  elements in the conjugacy class $(\lambda^-,\lambda^+)$.

  By Corollary~\ref{cor:Bnq}, it suffices to show 
      \begin{equation*}
  \sum_{\substack{  (\lambda^-,\lambda^+) \vdash n \\  \lambda^-+\lambda^+=\lambda}}   (-1)^{|\lambda^+|} \prod_{d^- \in B(\lambda^-)}
  \frac{((q^{d^-}-1)^r)^{E(\lambda^-,d^-)}}{|C_{d^-} \wr \Sigma_{E(\lambda^-,d^-)}|}
  \prod_{d^+ \in B(\lambda^+)}
  \frac{((q^{d^+}+1)^r)^{E(\lambda^+,d^+)}}{|C_{d^+} \wr \Sigma_{E(\lambda^+,d^+)}|} =
\prod_{d \in B(\lambda)}
    \frac{((q^d-1)^r - (q^d+1)^r)^{E(\lambda,d)}}{|C_d \wr \Sigma_{E(\lambda,d)}|}
\end{equation*}
for all partitions $\lambda \vdash n$ and all integers $r \geq 1$.
  Introducing the the coefficients
\begin{equation*}
  c(\lambda^-,\lambda^+) = (-1)^{|\lambda^+|}
  \frac{\prod_{d \in B(\lambda)}|C_d \wr \Sigma_{E(\lambda,d)}|}
  {\prod_{d^- \in B(\lambda^-)}|C_{d^-} \wr \Sigma_{E(\lambda^-,d^-)}|
  \prod_{d^- \in B(\lambda^+)}|C_{d^+} \wr \Sigma_{E(\lambda^+,d^+)}|}
\end{equation*}
we need to show
\begin{equation*}
  \sum_{\lambda^-+\lambda^+=\lambda}  c(\lambda^-,\lambda^+)
  \prod_{d^- \in B(\lambda^-)} ((q^{d^-}-1)^r)^{E(\lambda^-,d^-)}
  \prod_{d^+ \in B(\lambda^+)} ((q^{d^+}+1)^r)^{E(\lambda^+,d^+)} 
  =
  \prod_{d \in B(\lambda)}
    ((q^d-1)^r - (q^d+1)^r)^{E(\lambda,d)}
  \end{equation*}
  That reason that this is true is that
  the binomial formula
  \begin{equation*}
    (a_1-b_1)^n = \sum_{1 \leq i \leq n} (-1)^{n-i} \frac{|C_1 \wr
        \Sigma_n|}{|C_1 \wr \Sigma_i| |C_1 \wr \Sigma_{n-i}|} a_1^i b_1^{n-i}
  \end{equation*}
  generalizes to the identity
  \begin{equation*}
  \prod_{d \in B(\lambda)}
    (a_d-b_d)^{E(\lambda,d)} =
  \sum_{\lambda^-+\lambda^+=\lambda}  c(\lambda^-,\lambda^+)
   \prod_{d^- \in B(\lambda^-)} a_{d^-}^{E(\lambda^-,d^-)}
  \prod_{d^+ \in B(\lambda^+)} b_{d^+}^{E(\lambda^+,d^+)} 
  \end{equation*}
 in the \pol\ ring 
 $\Z[a_d,b_d \mid d \in
  B(\lambda)]$ with the $2|B(\lambda)|$ indeterminates $a_d,b_d$, $d \in B(\lambda)$.
 \end{proof}

 The right hand side of the identity from Theorem~\ref{thm:WSp} is
 \begin{multline*}
   \frac{(-1)^n}{|W(C_n)|}\sum_{w \in W(C_n)} \det(w) \det(q-w)^r =
   \frac{(-1)^n}{|W(C_n)|}\sum_{w \in W(C_n)} \det(w^{-1}) \det(q-w^{-1})^r \\ =
  \frac{(-1)^n}{|W(C_n)|}\sum_{w \in W(C_n)} \det(w) \det(q-w^{-1})^r =                                                        
  \frac{(-1)^n}{|W(C_n)|}\sum_{w \in W(C_n)} \det(w) |T_n(\overline{\F}_s)_w^{F_q}|^r 
\end{multline*}
where we used  \cite[Proposition~25.3(c)]{malle-testerman2011} and  $\det(w)=\det(w^{-1})$.

\begin{cor} \label{cor:WSp}  
The generating functions for the sequences
 $(-\rchi_{r+1}(\Sp{2n}q))_{r \geq 0}$ and
 $(-\rchi_{r+1}(p,\Sp{2n}q))_{r \geq 0}$ (with fixed $n$) are
 \begin{align*}
   \sum_{r \geq 0} -\rchi_{r+1}(\Sp{2n}q) x^r &=
  \frac{(-1)^n}{|W(C_n)|} \sum_{w \in W(C_n)}
                                                \frac{\det(w)}{1-x\det(q-w)} \\
   \sum_{r \geq 0} -\rchi_{r+1}(p,\Sp{2n}q) x^r &=
  \frac{(-1)^n}{|W(C_n)|} \sum_{w \in W(C_n)}
  \frac{\det(w)}{1-x\det(q-w)}_p
 \end{align*}
for all $n \geq 1$.
\end{cor}

By considering conjugacy classes rather than the individual elements in $W(C_n)$,
the formulas of Corollary~\ref{cor:WSp} can also be written as
\begin{align}\label{eq:corWSp}
  \sum_{r \geq 0} -\rchi_{r+1}(\Sp{2n}q) x^r &=
  \sum_{(\lambda^-,\lambda^+) \vdash n} (-1)^{|\lambda^-|} \frac{1}{T(\lambda^-,\lambda^+)}
                                               \frac{1}{\displaystyle 1-x U(\lambda^-,\lambda^+)} \\
  \sum_{r \geq 0} -\rchi_{r+1}(p,\Sp{2n}q) x^r &=
  \sum_{(\lambda^-,\lambda^+) \vdash n} (-1)^{|\lambda^-|} \frac{1}{T(\lambda^-,\lambda^+)}
  \frac{1}{\displaystyle 1-x U(\lambda^-,\lambda^+)_p} \label{eq:corWSpB}
\end{align}
where
\begin{align*}
  &T(\lambda^-,\lambda^+) =\prod_{d^- \in B(\lambda^-)} |C_{d^-} \wr \Sigma_{E(\lambda^-,d^-)}|
    \prod_{d^- \in B(\lambda^-)} |C_{d^-} \wr
    \Sigma_{E(\lambda^-,d^-)}| \\
    &U(\lambda^-,\lambda^+) =\prod_{d^- \in B(\lambda^-)} (q^{d^-}-1)^{E(\lambda^-,d^-)}
    \prod_{d^+ \in B(\lambda^+)} (q^{d^+}+1)^{E(\lambda^+,d^+)}
\end{align*}
for every partition  $(\lambda^-,\lambda^+)$ of $n$ into parts of two
kinds.

\begin{cor}\label{cor:Spinv}
  Let $\rho \colon W(C_n)=  C_2 \wr \Sigma_n\to W(A_n)= \Sigma_n$
  denote the projection with kernel $C_2^n$. Then
  \begin{align*}
    1 + \sum_{n \geq 1}  \frac{x^n}{|W(C_n)|} \sum_{w \in W(C_n)} \det(\rho(w)) \det(w)
    \det(q-w)^r  &=
    \prod_{\substack{0 \leq j \leq r \\ j \not\equiv r
    \bmod 2}} (1-q^jx)^{\binom{r}{j}} \\
    1 + \sum_{n \geq 1}  \frac{x^n}{|W(C_n)|} \sum_{w \in W(C_n)} \det(\rho(w)) \det(w)
    \det(q-w)^r_p &= \FSp{r+1}(p,q,x)^{-1}
  \end{align*}
\end{cor}
\begin{proof}
   We find expresssions for the reciprocal power series
   $\FSp{r+1}(q,x)^{-1}$ and $\FSp{r+1}(p,q,x)^{-1}$.
   As in Corollary~\ref{cor:Bnq}, we have
  \begin{equation*}
    \FSp {r+1}(q,x)^{-1} =
    \exp\big(\sum_{n \geq 1} H(r,q^n) \frac{x^n}{n} \big) =
    \sum_{\lambda \vdash n} x^n
    \prod_{d \in B(\lambda)}
                                 \frac{((q^d-1)^r/2-(q^d+1)^r/2)^{E(\lambda,d)}}{|C_d
                                 \wr \Sigma_{E(\lambda,d)}|}
                             \end{equation*}
     and we can identify the coefficients of this power series as sums
     indexed by $W(C_n)$ as in the proof of Theorem~\ref{thm:WSp}.                   
\end{proof}

\begin{exmp}\label{exmp:WSp}
Corollary~\ref{cor:Bnq} for $n=1,2,3$ shows
\begin{equation*}  
  -\rchi_{r+1}(\Sp {2n}q) =
  \begin{cases}
    -H(r,q) & n=1 \\
    \frac{1}{2}H(r,q)^2 - \frac{1}{2}H(r,q^2) & n=2 \\
     -\frac{1}{6}H(r,q)^3 +
                         \frac{1}{2}H(r,q)H(r,q^2) -
                         \frac{1}{3}H(r,q^3) & n=3
  \end{cases}
\end{equation*}
where $H(r,q) = \frac{1}{2}(q-1)^r - \frac{1}{2}(q+1)^r$.
With fixed $n=1,2$, Theorem~\ref{thm:WSp} (in the formulation of \eqref{eq:thmWSp}) shows that
\begin{equation*}
  -\rchi_{r+1}(\Sp{2n}q) =
  \begin{cases}
    \frac{1}{2}(q+1)^r - \frac{1}{2}(q-1)^r & n=1 \\
  \frac{1}{8}(q-1)^{2r} + \frac{1}{8}(q+1)^{2r} +
    \frac{1}{4}(q^2+1)^r   - \frac{1}{4}(q^2-1)^r -
    \frac{1}{4}(q-1)^r(q+1)^r & n=2
  \end{cases}
\end{equation*}
and with fixed $r=1,2,3$ it shows that
\begin{equation*}
\frac{(-1)^n}{|W(C_n)|}\sum_{w
  \in W(C_n)} \det(w) \det(q-w)^r =
\begin{cases}
  1 & r=1\\
  nq^{n-1} & r=2 \\
  \sum_{0 \leq j \leq n} \binom{2+j}{2}q^{2j} & r=3
\end{cases}
\end{equation*}
for all $n \geq 1$. From
Corollary~\ref{cor:WSp}  (in the formulation
  of \eqref{eq:corWSp}) for $n=1,2$  we get
  \begin{equation*}  
    \sum_{r \geq 0} -\rchi_{r+1}(\Sp{2n}q) x^r =
    \begin{cases}
       \frac{\frac{1}{2}}{1-(q+1)x} -
                                              \frac{\frac{1}{2}}{1-(q-1)x}
                                              &  n=1 \\
                   \frac{\frac{1}{8}}{1-x(q-1)^2} +
                                              \frac{\frac{1}{8}}{1-x(q+1)^2}
                                              +
                                              \frac{\frac{1}{4}}{1-x(q^2+1)}
                                              -
                                              \frac{\frac{1}{4}}{1-x(q^2-1)}
                                              -
                                              \frac{\frac{1}{4}}{1-x(q-1)(q+1)}
                                              & n=2                             
    \end{cases}
  \end{equation*}
  In the $p$-primary case,
  when $p=2$ and $q= 3^{2^e}$ with $e>0$, $(q-1)_2=2^{2+e}$,
$(q+1)_2=2$, $(q^2-1)_2=2^{3+e}$, $(q-1)_2(q+1)_2=2^{3+e}$,
$(q^2+1)_2=2$,    and we get
\begin{align*}
  \sum_{r \geq 0} -\rchi_{r+1}(2,\Sp{2}{3^{2^e}}) x^r &=
                                              \frac{\frac{1}{2}}{1-2x} -
                                              \frac{\frac{1}{2}}{1-2^{2+e}x} \\
  \sum_{r \geq 0} -\rchi_{r+1}(2,\Sp{4}{3^{2^e}}) x^r &=
                                              \frac{\frac{1}{8}}{1-4^{2+e}x} +
                                              \frac{\frac{1}{8}}{1-4x}
                                              +
                                              \frac{\frac{1}{4}}{1-2x}
                                              -
                                              \frac{\frac{1}{4}}{1-2^{3+e}x}
                                              -
                                              \frac{\frac{1}{4}}{1-2^{3+e}x}
\end{align*}
from Corollary~\ref{cor:WSp} (in the formulation of
\eqref{eq:corWSpB}).
Corollary~\ref{cor:Spinv} with $r=1,2$ and
Example~\ref{exmp:FSpprim}  show that
\begin{align*}
  &1 + \sum_{n \geq 1}  \frac{x^n}{|W(C_n)|} \sum_{w \in W(C_n)} \det(\rho(w)) \det(w)
  \det(q-w)^r =
  \begin{cases}
    1-x & r=1 \\ 1- 2qx + x^2 & r=2
  \end{cases} \\
       &1 + \sum_{n \geq 1}  \frac{x^n}{|W(C_n)|} \sum_{w \in W(C_n)} \det(\rho(w)) \det(w)
  \det(3-w)_2^r =
  \begin{cases}
    \displaystyle \frac{1-x}{\prod_{n \geq 0} Q(x^{2^{n+1}})} & r=1 \\
     \displaystyle \frac{(1-x)^2Q(x)^2}{\prod_{n \geq 0} Q(x^{2^{n+1}})^{2^{n+3}}} & r=2
  \end{cases}
\end{align*}
where $Q(x) = \frac{1-x}{1+x}$. Consequently,
 $\sum_{w \in W(C_n)} \det(\rho(w)) \det(w)
    \det(q-w)^r=0$ for all $n>r$ if $r=1,2$.
\end{exmp}

\section{Polynomial identities for partitions into parts of two kinds}
\label{sec:polyn-ident}

For any \pol\ sequence $S$ and any rational number $m$ the
$mS$-transform
of $1 \pm x$ is \cite[Lemma~7.1]{jmm:eulergl-} 
\begin{equation*}
  T_{mS}(1 \pm x) =
  \prod_{d \geq 1} (1 \pm x^d)^{mS(d)(q)} =
  \prod_{d \geq 1}  \sum_E \binom{mS(d)(q)}{\pm E} x^{dE}=
  \sum_{n \geq 0} x^n
    \sum_{\lambda \vdash n} \prod_{d \in B(\lambda)}
    \binom{m  S(d)(q)}{\pm E(\lambda,d)} 
  \end{equation*}
The classical \pol\ identity $T_{\IM {}q}(1-x) = \frac{1-qx}{1-x}$
gives  the \pol\ identity
 \begin{equation*}
    \sum_{n \geq 0} x^n
    \sum_{\lambda  \vdash n} \prod_{d \in B(\lambda)}
    \binom{m\IM dq}{-E(\lambda,d)} 
    = \left( \frac{1-qx}{1-x} \right)^m
  \end{equation*}
  for partitions. The cases $m = \pm 1$ are
Th{\'e}venaz' \pol\
identities \cite[Theorems~A--B]{thevenaz92poly} \cite[Corollary~7.2]{jmm:eulergl-}.

The  identities
$  T_{-\SRIM -{}q}(1+x) T_{-\SRIM +{}q}(1-x) = 1-x$ and
$  T_{\SRIM -{}q}(1-x) T_{-\SRIM -{}q}(1+x) =   \frac{1-qx}{1-x}$ (for
odd $q$)
of Lemma~\ref{lemma:ofir} 
translate into the following \pol\ identities
\begin{align*}  
  \sum_{n \geq 0} x^n \sum_{(\lambda^-,\lambda^+) \vdash n}
  \prod_{d^- \in B(\lambda^-)} \binom{-m \SRIM
  -{d^-}q}{E(\lambda^-,d^-)}
  \prod_{d^+ \in B(\lambda^+)} \binom{-m \SRIM
  +{d^+}q}{-E(\lambda^+,d^+)}  &= (1-x)^m \\
  \sum_{n \geq 0} x^n \sum_{(\lambda^-,\lambda^+) \vdash n}
  \prod_{d^- \in B(\lambda^-)} \binom{m \SRIM
  -{d^-}q}{-E(\lambda^-,d^-)}
  \prod_{d^+ \in B(\lambda^+)} \binom{-m \SRIM
    -{d^+}q}{E(\lambda^+,d^+)}  &=
                                  \begin{cases}
                                   \displaystyle \Big( \frac{1-qx}{1-x} \Big)^m & 2
                                   \nmid q \\
                                   (1-qx)^m &  2 \mid q
                                  \end{cases}
\end{align*}
for partitions into parts of two kinds.

\begin{exmp}
  Based on the partitions,
$\{(1^1,\emptyset), (\emptyset,1^1)\}$ and
$\{(2^1, \emptyset), (1^2, \emptyset),
(1^1,1^1), (\emptyset,1^2), (\emptyset,2^1)\}$, of $1$ and $2$ 
into parts of two kinds, we  have
the identities, valid for any rational number $m$,
\begin{gather*}
  \binom{m\SRIM +1q}{-1} +
  \binom{m\SRIM -1q}{1} = - \binom{-m}{1} \\
  \binom{m\SRIM +2q}{-1} +
  \binom{m\SRIM +1q}{-2} +
  \binom{m\SRIM -1q}{1} \binom{m\SRIM +1q}{-1} +
  \binom{m\SRIM -2q}{1} +
  \binom{m\SRIM -1q}{2}= \binom{-m}{2} \\
  \binom{m\SRIM -1q}{-1} +
  \binom{-m\SRIM -1q}{1} =
  \begin{cases}
   -mq+m & 2 \nmid q \\ -mq & 2 \mid q    
  \end{cases}
\\
  \binom{m\SRIM -2q}{-1} +
  \binom{m\SRIM -1q}{-2} +
  \binom{m\SRIM -1q}{-1} \binom{-m\SRIM -1q}{1} +
  \binom{-m\SRIM -1q}{2} +
  \binom{-m\SRIM -2q}{1} \\=
  \begin{cases}
  \binom{m}{2}q^2 - m^2q + \binom{m+1}{2} & 2 \nmid q \\ \binom{m}{2}q^2 & 2 \mid q  
  \end{cases}
\end{gather*}
 by comparing coefficients of $x^n$ for $n=1,2$.
\end{exmp}

\appendix

 \section{Equivariant \Euc s of posets}
 \label{sec:equivariant-euc-s-1}

This appendix contains a  few elementary observations about
equivariant \Euc s for group actions on posets.
 
 Let $S$ be a finite set and $\dim \colon S \to \Z$ a
 function associating an integer $\geq -1$ to every element of $S$.
 The
 \Euc\ and the reduced \Euc\ of the graded set $(S,\dim)$ are the alternating sums
 \begin{equation*}
   \chi(S,\dim) = \sum_{d \geq 0} (-1)^d|\dim^{-1}(d)| \qquad \rchi(S,\dim) = \sum_{d
    \geq -1} (-1)^d|\dim^{-1}(d)| =
  \chi(S,\dim)-|\dim^{-1}(-1)|
 \end{equation*}
 of the numbers of $d$-dimensional elements of $S$ for $d \geq 0$ or
 $d \geq -1$.

 Let $\Pi$ be a finite poset.  A simplex in $\Pi$ is a totally ordered
 subset of $\Pi$.  The set $|\Pi|$ of all simplices in $\Pi$ is graded
 by the function $\dim \colon |\Pi| \to \Z$ taking a simplex
 $\sigma \subseteq \Pi$ to one less than its cardinality,
 $\dim \sigma = |\sigma|-1$.

\begin{defn}\label{defn:Eucposeti}
The \Euc\ of the poset $\Pi$ is $\chi(\Pi) =
\chi(|\Pi|,\dim)$ and the reduced \Euc\  is  $\rchi(\Pi) =
\rchi(|\Pi|,\dim) =\chi(\Pi)-1$.
\end{defn}

Let $G$ be a finite group. Write $\Hom{\Z^r}G$ for the set of
homo\m s of $\Z^r$ to $G$ and 
$\Hom{\Z^r}G/G$ for the set of conjugacy classes of
such homo\m s. Equivalently, $\Hom{\Z^r}G$ is the set of commuting
$r$-tuples of elements in $G$ and $\Hom{\Z^r}G/G$ is the set of
conjugacy classes of commuting $r$-tuples.

Suppose now that $G$ acts on the poset $\Pi$ through order preserving
bijections.  For any finite subset $X$ of $G$, let
$C_\Pi(X)=\{p \in \Pi \mid \forall g \in X \colon p^g = p\}$ denote
the full subposet of elements of $\Pi$ fixed under the action from
$X$.

\begin{defn}[Atiyah and Segal \cite{atiyah&segal89}]\label{defn:atiyahsegal}
The $r$th, $r \geq 1$, equivariant \Euc\ and
reduced equivariant \Euc\ of the $G$-poset $\Pi$ are 
\begin{align*}
  \chi_r(\Pi,G) &= \frac{1}{|G|} \sum_ {X \in \Hom {\Z^r}{G}}
  \chi(C_\Pi(X(\Z^r))) \\ 
  \rchi_r(\Pi,G) &= \frac{1}{|G|} \sum_ {X \in \Hom {\Z^r}{G}}
  \rchi(C_\Pi(X(\Z^r))) = \chi_r(\Pi,G) - |\Hom{\Z^r}G|/|G|
\end{align*}  
\end{defn}
The equivariant \Euc s satisfy a recurrence relation.

\begin{lemma}\label{lemma:CgP}
  For all $r \geq 1$,
  \begin{equation*}
    \rchi_{r+1}(\Pi,G) =
    \sum_{X \in \Hom{\Z}G/G} \rchi_r(C_\Pi(X),C_G(X)) =
    \sum_{X \in \Hom{\Z^r}G/G} \rchi_1(C_\Pi(X),C_G(X))
  \end{equation*}
  and similar formulas are true for $\chi_{r+1}(\Pi,G)$.
\end{lemma}
\begin{proof}
  A little more generally, we consider $\rchi_{r_1+r_2}(\Pi,G)$ for $r_1
  \geq 1$ and $r_2 \geq 2$. The equivariant \Euc\ is
  \begin{multline*}
    \rchi_{r_1+r_2}(\Pi,G) =
    \frac{1}{|G|} \sum_{X \in \Hom {\Z^{r_1+r_2}}{G}}
    \rchi(C_\Pi(X)) 
      =\frac{1}{|G|} \sum_{X_1  \in \Hom{\Z^{r_1}}G}\sum_ {X_2 \in
        \Hom {\Z^{r_2}}{C_G(X_1)}} \rchi(C_{C_\Pi(X_1)}(X_2)) \\
    = \frac{1}{|G|} \sum_{X_1  \in \Hom{\Z^{r_1}}G} |C_G(X_1)|
    \rchi_{r_2}(C_\Pi(X_1),C_G(X_1)) =
    \sum_{X_1  \in \Hom{\Z^{r_1}}G/G}   \rchi_{r_2}(C_\Pi(X_1),C_G(X_1)) 
  \end{multline*}
  where we use that the conjugacy class of $X_1$ contains $|G : C_G(X_1)|$ elements.
\end{proof}

The set $|C_\Pi(X)|/C_G(X)$ of $C_G(X)$-orbits of
$C_\Pi(X)$-simplices,
for any $X \subseteq G$,
has \Euc\ relative to the dimension function induced by the dimension
function on $|\Pi|$.

\begin{lemma}\label{lemma:chi1}
 $\displaystyle \rchi_{r+1}(\Pi,G) = \sum_{X  \in \Hom{\Z^{r}}G/G}
 \rchi(|C_\Pi(X)|/C_G(X))$ for all $r \geq 0$.
\end{lemma}
\begin{proof}
  We first consider the case $r=0$.
  The orbit counting formula shows that
  \begin{align*}
    \rchi(|\Pi|/G) = \sum_{d \geq -1} (-1)^d | \dim^{-1}(d)/G |& =
    \frac{1}{|G|}
    \sum_{d \geq -1} (-1)^d \sum_{g \in G} ||C_\Pi(g)| \cap \dim^{-1}(d)|
    \\ &=
    \frac{1}{|G|} \sum_{g \in G} \sum_{d \geq -1} (-1)^d ||C_\Pi(g)| \cap \dim^{-1}(d) |  = \frac{1}{|G|}
    \sum_{g \in G} \chi(C_\Pi(g)) = \chi_1(\Pi,G)
  \end{align*}
  Consequently, for all $r \geq 1$,
  \begin{equation*}
    \rchi_{r+1}(\Pi,G)
  =  \sum_{X  \in \Hom{\Z^{r}}G/G} \rchi_1(C_\Pi(X),C_G(X)) =  \sum_{X  \in \Hom{\Z^{r}}G/G} \rchi(|C_\Pi(X)|/C_G(X))
  \end{equation*}
  by Lemma~\ref{lemma:CgP}.
\end{proof}
It is clear from Lemma~\ref{lemma:chi1}, but maybe not from
Definition~\ref{defn:atiyahsegal},
that all equivariant \Euc s are integers.

\section{Eulerian functions of groups}
\label{sec:comp-equiv-euc}

Let $G$ be a finite group acting on a finite poset $\Pi$. For any
natural number $r \geq 1$, the $r$th equivariant reduced \Euc\ and the
$p$-primary $r$th equivariant reduced \Euc\ are \cite{atiyah&segal89} \cite{tamanoi2001}
\begin{align}
  &\rchi_r(\Pi,G) = \frac{1}{|G|}\sum_{X \in \Hom{\Z^r}G} \rchi(C_{\Pi}(X)) =
  \frac{1}{|G|} \sum_{ B \leq G} \varphi_{\Z^r}(B) \rchi(C_\Pi(B)) \label{rchirPiG:item1}\\
  &\rchi_r(p,\Pi,G) = \frac{1}{|G|} \sum_{X \in \Hom{Z_p^r}G} \rchi(C_\Pi(X)) =
  \frac{1}{|G|}\sum_{ B \leq G} \varphi_{Z_p^r}(B) \rchi(C_\Pi(B)) \label{rchirPiG:item2}
\end{align}
where $\varphi_{\Z^r}(B)$ ($\varphi_{Z_p^r}(B)$) is the number of
epi\m s of the abelian group $\Z^r$ ($Z_p^r =\Z \times \Z_p^{r-1}$) onto the
subgroup $B$ of $G$. In this appendix, we recall some of the
properties, helpful for concrete computer assisted calculations of
equivariant \Euc s, of the eulerian function $\varphi_{\Z^r}(B)$
\cite{hall36}.


For any finite group $B$, let $\Hom{\Z^r}B$ and $\Epi{\Z^r}B$ be the
set of homo\m s or epi\m s of $\Z^r$ to $B$. Then
$\Hom{\Z^r}B =\coprod_{A \leq B} \Epi{\Z^r}A$ and
$\varphi_{\Z^r}(B) = |\mathrm{Epi}(\Z^r,B)|$. 
(When $r=1$ and $C_n$ is
cyclic of order $n$, $\varphi_{\Z^1}(C_n)$ is Euler's totient function
$\varphi(n)$.)
We observe that $\varphi_{\Z^r}$ is multiplicative.

\begin{lemma}\label{lemma:phir}
  Let $B_1$ and $B_2$ be two finite groups of coprime order.
  \begin{enumerate}
  \item \label{item:phir1}  For any subgroup
  $A$ of $B_1 \times B_2$, $A = A_1 \times A_2$ where $A_i$ is the image of
  $A$ under the projection $B_1 \times B_2 \to B_i$, $i=1,2$.
\item \label{item:phir2} $\varphi_{\Z^r}(B_1 \times B_2) = \varphi_{\Z^r}(B_1) \times
  \varphi_{\Z^r}(B_2)$ for any $r \geq 1$
  \end{enumerate}
\end{lemma}
\begin{proof}
  Let $g_i$ be the order of $B_i$, $i=1,2$. The order of $A$, which divides
  $g_1g_2$, is of the form $k_1k_2$ where $k_1$ divides $g_1$ and $k_2$
  divides $g_2$. The order of $A_i$ divides $k_1k_2$ and $g_i$. Thus $|A_i|$
  divides $k_i$.  It follows that the order of $A_1 \times A_2$ divides the
  order of $A$. But $A$ is a subgroup of $A_1 \times A_2$ so $|A| = |A_1
  \times A_2|$ and $A= A_1 \times A_2$.
\end{proof}

Next, we compute $\varphi_{\Z^r}(C_p^d)$ where $C_p^d$ is elementary
abelian of order $p^d$.  First,
$\Epi{\Z^r}{C_p^d} = \Epi{C_p^r}{C_p^d}$, the set of epi\m s of
$C_p^r$ onto $C_p^d$.  Next, note that there is a bijection between
the orbit set $\Epi{C_p^r}{C_p^d}/\Aut{}{C_p^d}$ and the set of
$(r-d)$-dimensional subspaces of $\F_p^r$ (kernels of epi\m s). The
number of such subspaces is the Gaussian binomial coefficient
$
  \binom r{r-d}_p = \binom rd_p
$ \cite[Proposition~1.3.18]{stanley97}.
Thus
\begin{equation}\label{eq:epiprpd}
  \Epi{\Z^r}{C_p^d} = \binom rd_p |\GL d{p}| = \prod_{j=0}^{d-1} (p^r-p^j)
\end{equation}
In the general case,  the number of homo\m\ of $\Z^r$ to
$B$ is
\begin{equation*}
  |\Hom{\Z^r}B| = \sum_{A \leq B}|\Epi{\Z^r}A|
   = \sum_{A \leq G}|\Epi{\Z^r}A| \zeta(A,B)
 \end{equation*}
 where $\zeta(A,B)=1$ if $A \leq B$ and $\zeta(A,B)=0$ otherwise.
The number of epi\m\ of $\Z^r$ onto $B$ is
\begin{equation*}
  \varphi_{\Z^r}(B) =
  |\Epi{\Z^r}B| = \sum_{A \leq G}|\Hom{\Z^r}A| \mu(A,B)
\end{equation*}
by \Mb\ inversion. Of course, $\varphi_{\Z^r}(B)>0$ if and only if $B$
is abelian and generated by $r$ of its elements. Assuming $B$ {\em is\/} abelian,
$|\Hom{\Z^r}A|=|A|^r$ for any $A \leq B$ so that
\cite{hall36,gaschutz59,wall62}
\begin{equation*}
   \varphi_{\Z^r}(B) = 
  |\Epi{\Z^r}B| = \sum_{A \leq B} |A|^r \mu(A,B)
\end{equation*}
The \Mb\ function $\mu(A,B)=0$ unless $\Phi(B) \leq A \leq B$ and then
$\mu_B(A,B) = \mu_{B/\Phi(B)}(A/\Phi(B),B/\Phi(B))$ where $\Phi(B)$ is
the Frattini subgroup \cite{gaschutz59}. Therefore
\begin{equation*}
   \varphi_{\Z^r}(B) 
   = \sum_{A \leq B} |A|^r \mu_B(A,B) =
   |\Phi(B)|^r  \sum_{A \leq B/\Phi(B)} |A|^r \mu_{B/\Phi(B)}(A,B/\Phi(B)) =
   |\Phi(B)|^r \varphi_{\Z^r}(B/\Phi(B))
 \end{equation*}
 The abelian group $B$ is the product, $B=\prod B_p$, of its \syl ps, $B_p$. By
multiplicativity (Lemma~\ref{lemma:phir}.\eqref{item:phir2}),
\begin{equation*}
  \varphi_{\Z^r}(B) = \prod_p \varphi_{\Z^r}(B_p)
\end{equation*}
The Frattini quotient $B_p/\Phi(B_p)$ is an elementary abelian $p$-group of
order, say, $p^d$. We conclude that
\begin{equation*}
   \varphi_{\Z^r}(B_p) = 
    |\Phi(B_p)|^r |\Epi{\Z^r}{C_p^d}| \stackrel{\text{\eqref{eq:epiprpd}}}{=}
    |\Phi(B_p)|^r \prod_{j=0}^{d-1} (p^r-p^j) =
    |B_p|^r \prod_{j=0}^{d-1} (1-p^{j-r}) 
  \end{equation*}
  For the final equality, use that
if $B_p$ has order $p^m$, then the order of the Frattini subgroup is $p^{m-d}$
so that $|\Phi(B_p)|^r = p^{r(m-d)}$.


For a prime $p$, put $Z^r_p = \Z \times \Z_p^{r-1}$ where $\Z_p$ is
the ring of $p$-adic integers. In particular, $Z^1_p = \Z$ is
independent of $p$. The number of epi\m s of $Z_p^r$ onto $B$ is
\begin{equation*}
  \varphi_{Z_p^r}(B) = \varphi_{Z_p^r} (\prod_s B_s) = \prod_s
  \varphi_{Z_p^r}(B_s) = \varphi_{\Z^r}(B_p) \prod_{s \neq p} \varphi_{\Z}(B_s)
\end{equation*}
where $B_s$ is the \syl s\ of $B$. Here,
$\varphi_{\Z}(B_s)=|B_s|(1-p^{-1})$ if $B_s$ is cyclic and
$\varphi_{\Z}(B_s)=0$ otherwise. Thus $\varphi_{Z_q^r}(B)>0$ if and
only if $B_q$ can be generated by $r$ of its elements and $B_s$ is
cyclic for all primes $s \neq q$.

The numbers of conjugacy classes of $r$-tuples of commuting
elements of $G$ and commuting $p$-power order elements are
\begin{equation*}
  |\Hom{\Z^r}G/G| = |\Hom{\Z^{r+1}}G|/|G|, \qquad
  |\Hom{\Z_p^r}G/G| = |\Hom{Z_p^{r+1}}G|/|G| \qquad (r \geq 0)
\end{equation*}
as $|\Hom{K}G/G| = |\Hom{\Z \times K}G|/|G|$ for any finite group $K$
\cite[Lemma~4.13]{HKR2000}.

\begin{exmp}
  The symplectic group $G=\Sp 23$, of order $24$, acts on  the discrete
  poset $L=\Li_2^*(\F_3)$ of $4$
  elements. Using Equation~\eqref{rchirPiG:item1} and the entries of
  the 
  table in Figure~\ref{fig:chirsp23} 
\begin{figure}[t]
  \centering
    \begin{tabular}[h]{>{$}c<{$}|*{3}{>{$}c<{$}}}
 A & C_2^0 & C_2^1  & C_2^2  \\ \hline
      |G : C_G(A)| & 1 & 1& 3  \\
   \rchi(C_L(A)) & 3& 3& -1 \\
\varphi_{\Z^{r+1}}(A) & 1 & 2^{r+1}-1 & 4^{r+1}-2^{r+1}
    \end{tabular}
  \caption{Abelian $3'$-subgroups of $\Sp 23$ }
  \label{fig:chirsp23}
\end{figure}
we get that
\begin{equation*}
  -\rchi_{r+1}(\Li_2^*(\F_3), \Sp 23) =
  -\frac{1}{24}(3+3(2^{r+1}-1) - 3(4^{r+1}-2^{r+1})) = \frac{1}{2}(4^r-2^r)
\end{equation*}
in accordance with Example~\ref{exmp:WSp}. By
Lemma~\ref{lemma:contrability} we only need to consider abelian
subgroups of $\Sp 23$ of order prime to $3$.
\end{exmp}

  \bibliographystyle{amsplain}
\bibliography{/Users/jespermoller/projects/top}
\end{document}